\begin{document}
\baselineskip=13pt

\numberwithin{equation}{section}

\newtheorem{thm}{Theorem}[section]
\newtheorem{lem}[thm]{Lemma}
\newtheorem{cor}[thm]{Corollary}
\newtheorem{Prop}[thm]{Proposition}
\newtheorem{Def}[thm]{Definition}
\newtheorem{Rem}[thm]{Remark}
\newtheorem{Ex}[thm]{Example}

\newcommand{\A}{\mathbb{A}}
\newcommand{\B}{\mathbb{B}}
\newcommand{\C}{\mathbb{C}}
\newcommand{\D}{\mathbb{D}}
\newcommand{\E}{\mathbb{E}}
\newcommand{\F}{\mathbb{F}}
\newcommand{\G}{\mathbb{G}}
\newcommand{\I}{\mathbb{I}}
\newcommand{\J}{\mathbb{J}}
\newcommand{\K}{\mathbb{K}}
\newcommand{\M}{\mathbb{M}}
\newcommand{\N}{\mathbb{N}}
\newcommand{\Q}{\mathbb{Q}}
\newcommand{\R}{\mathbb{R}}
\newcommand{\T}{\mathbb{T}}
\newcommand{\U}{\mathbb{U}}
\newcommand{\V}{\mathbb{V}}
\newcommand{\W}{\mathbb{W}}
\newcommand{\X}{\mathbb{X}}
\newcommand{\Y}{\mathbb{Y}}
\newcommand{\Z}{\mathbb{Z}}
\newcommand\ca{\mathcal{A}}
\newcommand\cb{\mathcal{B}}
\newcommand\cc{\mathcal{C}}
\newcommand\cd{\mathcal{D}}
\newcommand\ce{\mathcal{E}}
\newcommand\cf{\mathcal{F}}
\newcommand\cg{\mathcal{G}}
\newcommand\ch{\mathcal{H}}
\newcommand\ci{\mathcal{I}}
\newcommand\cj{\mathcal{J}}
\newcommand\ck{\mathcal{K}}
\newcommand\cl{\mathcal{L}}
\newcommand\cm{\mathcal{M}}
\newcommand\cn{\mathcal{N}}
\newcommand\co{\mathcal{O}}
\newcommand\cp{\mathcal{P}}
\newcommand\cq{\mathcal{Q}}
\newcommand\rr{\mathcal{R}}
\newcommand\cs{\mathcal{S}}
\newcommand\ct{\mathcal{T}}
\newcommand\cu{\mathcal{U}}
\newcommand\cv{\mathcal{V}}
\newcommand\cw{\mathcal{W}}
\newcommand\cx{\mathcal{X}}
\newcommand\ocd{\overline{\cd}}

\def\c{\centerline}
\def\ov{\overline}
\def\emp {\emptyset}
\def\pa {\partial}
\def\bl{\setminus}
\def\op{\oplus}
\def\sbt{\subset}
\def\un{\underline}
\def\al {\alpha}
\def\bt {\beta}
\def\de {\delta}
\def\Ga {\Gamma}
\def\ga {\gamma}
\def\lm {\lambda}
\def\Lam {\Lambda}
\def\om {\omega}
\def\Om {\Omega}
\def\sa {\sigma}
\def\vr {\varepsilon}
\def\va {\varphi}

\title{\bf Existence of solutions for a nonlinear Choquard equation with potential vanishing at infinity}

\author{Claudianor O. Alves\thanks{Partially supported by CNPq/Brazil
301807/2013-2, coalves@dme.ufcg.edu.br}\\
{\small  Universidade Federal de Campina Grande} \\ {\small Departamento de Matem\'{a}tica} \\ {\small CEP: 58429-900, Campina Grande - Pb, Brazil}
\\
\\
Giovany M. Figueiredo\thanks{Partially supported by CNPq/Brazil
301292/2011-9 and 552101/2011-7,  giovany@ufpa.br}
\\
{\small  Universidade Federal do Par\'a} \\ {\small Departamento de Matem\'atica} \\ {\small CEP: 66075-110 Bel\'em - Pa, Brazil}
\\
\\
Minbo Yang\thanks{Supported by NSFC (11571317, 11101374, 11271331) and ZJNSF(LY15A010010), mbyang@zjnu.edu.cn}\vspace{2mm}
\\
{\small  Department of Mathematics, Zhejiang Normal University} \\ {\small  Jinhua, Zhejiang, 321004, P. R. China.}}

\date{}
\maketitle

\begin{abstract}
We study the following class of nonlinear Choquard equation,
$$
-\Delta u +V(x)u =\Big( \frac{1}{|x|^\mu}\ast F(u)\Big)f(u)  \quad \mbox{in} \quad \R^N,
$$
where $0<\mu<N$, $N \geq 3$,   $V$ is a continuous real function and $F$ is the primitive function of $f$. Under some suitable assumptions on the potential $V$, which include the case $V(\infty)=0$, that is, $V(x)\to 0$ as $|x|\to +\infty$, we prove existence of a nontrivial solution for the above equation by penalization method.
 \vspace{0.3cm}

\noindent{\bf Mathematics Subject Classifications (2000):}35J50, 35J60, 35A15

\vspace{0.3cm}

 \noindent {\bf Keywords:} Nonlinear Choquard Equation; Nonlocal Nonlinearities; Vanishing Potential; Variational Methods.
\end{abstract}

\section{Introduction and main results}
In this paper, we study the existence of nontrivial solutions for the following nonlinear Choquard equation
$$
\left\{\aligned &-\Delta u +V(x)u =\Big( \frac{1}{|x|^\mu}\ast F(u)\Big)f(u)  \quad \mbox{in} \quad \R^N, \\
&  u\in D^{1,2}({\R}^N),\\
& u(x)>0\ \ \mbox{for all}\ \ x\in\mathbb{R}^N,
\endaligned\right.\eqno{(SNE)}
$$
where $0<\mu< N$, $N \geq 3$, $\Delta$ is the Laplacian operator, $V$ is a nonnegative continuous real function and $F$ is the primitive function of $f$.

This problem comes from looking for standing wave solutions for a nonlinear Schr\"{o}dinger
equation of the kind
\begin{equation}\label{EN}
i\partial_{t} \Psi =-\Delta \Psi
+W(x)\Psi-(Q(x)\ast|\Psi|^{q})|\Psi|^{q-2}\Psi,\ \ \
 \quad \mbox{in} \quad \R^N.
\end{equation}
Here $W$ is the external potential and $Q$ is the
response function possesses information on the mutual interaction
between the bosons. This type of nonlocal equation is known to describe the propagation of electromagnetic
waves in plasmas \cite{BC} and plays an important
role in the theory of Bose-Einstein condensation \cite{D}. It is clear that $\Psi(x,t)=u(x)e^{{-iE}t}$ solves
the evolution equation \eqref{EN} if, and only if, $u$ solves
\begin{equation}\label{SN}
 -\Delta u +V(x)u  =(Q(x)\ast|u|^{q})|u|^{q-2}u  \quad \mbox{in} \quad \R^N,
\end{equation}
with $V(x)=W(x)-E$.

When the response function is the Dirac function, i.e. $Q(x)=\delta(x)$, the
nonlinear response is local indeed and we are lead to  the Schr\"odinger equation
\begin{equation}\label{S.S}
 -\Delta u +V(x)u  = |u|^{q-2}u  \quad \mbox{in} \quad \R^N,
\end{equation}
which has been studied
extensively under various hypotheses on the potentials and the nonlinearities \cite{ASM, AS, AFM, BPW, BSc, BW, BP, DF2, DF1, DT, DS, FW, JT, LWZ}.

The aim of this paper is to study the existence of solutions for a class of nonlocal Schr\"odinger equation, that is, the response function $Q$ in $(SNE)$ is of Coulomb type, for example $\frac{1}{|x|^{\mu}}$, then we arrive at the Choquard-Pekar equation,
\begin{equation}\label{Nonlocal.S1}
 -\Delta u +V(x)u =\Big(\frac{1}{|x|^{\mu}}\ast|u|^{q}\Big)|u|^{q-2}u  \quad \mbox{in} \quad \R^N.
\end{equation}
The case $q=2$ and $\mu=1$ goes
back to the description of the quantum theory of a polaron at rest by S. Pekar in 1954 \cite{P1}
and the modeling of an electron trapped
in its own hole in 1976 in the work of P. Choquard, in a certain approximation to Hartree-Fock theory of one-component
plasma \cite{L1}. The equation is also known as the Schr\"{o}dinger-Newton equation, which was introduced by Penrose in his discussion on the selfgravitational collapse of a quantum mechanical wave-function.  Penrose suggested that the solutions of \eqref{Nonlocal.S1}, up to reparametrization, are the basic stationary states which do not spontaneously collapse any further, within a certain time scale.

 As far as we know, most of the existing papers consider the existence and property of the solutions for the nonlinear Choquard equation $(SNE)$ with constant potential. In \cite{L1}, Lieb proved the existence and uniqueness, up to translations,
of the ground state to equation \eqref{Nonlocal.S1}. Later, in \cite{Ls}, Lions showed the existence of
a sequence of radially symmetric solutions to this equation. Involving the properties of the ground state solutions,
 Ma and Zhao \cite{ML} considered the generalized Choquard equation \eqref{Nonlocal.S1} for $q\geq 2$, and they proved that every positive solution of (\ref{Nonlocal.S1}) is radially symmetric and monotone decreasing about some point, under the assumption that a certain
set of real numbers, defined in terms of $N, \alpha$ and $q$, is nonempty. Under the
same assumption, Cingolani, Clapp and Secchi \cite{CCS1}  gave some existence and
multiplicity results in the electromagnetic case, and established the regularity and
some decay asymptotically at infinity of the ground states of \eqref{Nonlocal.S1}. In \cite{MS1}, Moroz and Van
Schaftingen  eliminated this restriction and showed the regularity, positivity
and radial symmetry of the ground states for the optimal range of parameters, and
derived decay asymptotically at infinity for them as well. Moreover, Moroz and Van
Schaftingen in \cite{MS2, MS4} also considered  the existence of ground states for critical growth in the sense of Hardy-Littlewood-Sobolev inequality and under the assumption of Berestycki-Lions type.

Involving the problem with nonconstant potentials
\begin{equation}\label{Nonlocal.S2}
 -\Delta u +V(x)u =\Big(\frac{1}{|x|^{\mu}}\ast F(u)\Big)f(u)  \quad \mbox{in} \quad \R^N,
\end{equation}
where $V$ is a continuous periodic function with $\inf_{\mathbb{R}^{N}} V(x)> 0$, noticing that the nonlocal term is invariant under translation, it is possible to prove an existence result easily by applying the Mountain Pass Theorem, see \cite{AC} for example.
For a periodic potential $V$ that changes sign and $0$ lies in the gap of the spectrum of the Schr\"{o}dinger operator $-\Delta +V$, the  problem is strongly indefinite, and in \cite{BJS}, the authors proved the existence of nontrivial solution with $\mu=1$ and $F(u)=u^2$ by reduction methods. For a general class of response function $
Q$ and nonlinearity $f$, Ackermann \cite{AC} proposed an approach to prove the
existence of infinitely many geometrically distinct weak solutions.

As far as we know, the only papers that considered the generalized Choquard equation with vanishing potential are \cite{MS3, S}. Replacing $\Delta$ by $\vr^2\Delta$, S. Secchi \cite{S} obtained the existence result for small $\vr$ when $V>0$ and $\liminf_{|x|\to\infty}V (x)|x|^\gamma > 0$ for some
$\gamma \in [0, 1)$  by a Lyapunov-Schmidt type reduction. In \cite{MS3}, the authors prove the existence and concentration behavior of the semiclassical states for the problem with $F(u)=u^p$ for small $\vr$. There, they developed a penalization technique by constructing supersolutions to a linearization of the penalized problem in an outer domain, and after,  estimate the
solutions of the penalized problem by some comparison principle. Besides these two papers, in our mind, for constant $\vr=1$ and general nonlinearities, the existence of nontrivial solution for the Choquard equation with vanishing potential is still not known.

Motivated by the above facts and a recent paper due to Alves and Souto \cite{AS}, in the present paper we intend to study the Choquard equation $(SNE)$ with potentials vanishing at infinity, that is, $V(\infty)=0$. This class of problems goes back to the work by Berestycki and Lions in \cite{BP} where the authors studied the Schr\"odinger equation with zero mass and showed that the problem
$$
\left\{
\begin{array}{l}
-\Delta u = f(u), \mbox{in} \,\,\ \   \mathbb{R}^{N}, \\
u \in D^{1,2}(\mathbb{R}^{N}),
\end{array}
\right.
$$
has no ground state solution if $f(t)=|t|^{p-2}t$. However, they also proved that if $f$ behaves like $|t|^{q-2}t$ for small $t$ and $|t|^{p-2}t$ for large $t$ when $p<2^*<q$, then the problem has a ground state solution. For example, these conditions are verified by the function
$$
f(t)=
\left\{
\begin{array}{l}
|t|^{q-2}t, \,\, t \leq 1,\\
h(t), \,\, 1 \leq t \leq 2,\\
|t|^{p-2}t, \,\,\, t \geq 2,
\end{array}
\right.
$$
where $h$ is selected so that $f$ is a $C^{1}(\mathbb{R})$ function. Later Benci, Grisanti and Micheletti \cite{BGM1,BGM2} considered problems of the type
$$
\left\{
\begin{array}{l}
-\Delta u + V(x)u= f(u), \mbox{in} \,\,  \ \ \mathbb{R}^{N}, \\
u \in D^{1,2}(\mathbb{R}^{N})
\end{array}
\right.
$$
and studied the existence and nonexistence of ground state solution.

Motivated by the above papers and their hypotheses, we assume the following conditions on the nonlinearity $f$:
$$
\lim_{s\to 0^{+}}\frac{sf(s)}{s^q}<+\infty,\ \ \eqno{(f_1)}
$$
$q\geq 2^*=\frac{2N}{N-2}$. Assume that $0<\mu<\frac{N+2}{2}$ and
$$
\lim_{s\to +\infty}\frac{sf(s)}{s^{p}}=0, \eqno{(f_2)}
$$
 for some $p \in  (1, \frac{2(N-\mu)}{N-2})$.  Note that interval $(1, \frac{2(N-\mu)}{N-2})$ is not empty because of the above condition on $\mu$. We would like point out that by $(f_1)$, since $\frac{2(N-\mu)}{N-2}<\frac{2N-\mu}{N-2}<2^*$, there holds
\begin{equation} \label{ZR1}
\lim_{s\to0}\frac{sf(s)}{s^{\frac{2N-\mu}{N-2}}}=0\ \ \hbox{and}\ \ \lim_{s\to0}\frac{sf(s)}{s^p}=0.
\end{equation}
And thus, by $(f_1)-(f_2)$ and (\ref{ZR1}), there exists $c_0>0$ such that
\begin{equation} \label{Growth}
|sf(s)|\leq c_0|s|^{2^*},\ \ \ |sf(s)|\leq c_0|s|^{q}, \ \ \ |sf(s)|\leq c_0|s|^{\frac{2N-\mu}{N-2}} \ \  \hbox{and}\ \ |sf(s)|\leq c_0|s|^{p} \quad \forall s \in \mathbb{R}.
\end{equation}

The nonlinearity $f$ is also supposed to verify the Ambrosetti-Rabinowitz type
superlinear condition for nonlocal problem, that is, there exists $\theta>2$, such that
$$
0<\theta F(s)\leq 2f(s)s \,\,\,\, \forall s>0, \eqno{(f_3)}
$$
where $F(t)=\int^t_0f(s)ds$. Without loss of generality, we will assume that $\theta<4$ in the rest of the paper.

Related to the potential $V(x)$, we assume that it is a nonnegative continuous function, and we fix the following notations: \\
\begin{equation} \label{m}
m=\max_{|x|\leq 1}V(x)
\end{equation}
and we define the function $\mathcal{V}:(1,+\infty) \to [0, \infty)$ by
\begin{equation} \label{VR1}
\mathcal{V}(R)=\frac{1}{R^{(q-2)(N-2)}}\inf_{|x|\geq R}|x|^{(q-2)(N-2)}V(x).
\end{equation}
\\

One of the main results of this paper is the following existence result,
\begin{thm}\label{MRS}
Assume that $0<\mu<\frac{N+2}{2}$ and conditions $(f_1)-(f_3)$ hold. There exists a constant  $\mathcal{V}_0=\mathcal{V}_0(m,\theta,p,\mu,c_0)$ such that, if $\mathcal{V}(R) >\mathcal{V}_0$ for some $R>1$, then problem $(SNE)$ has a positive solution.
\end{thm}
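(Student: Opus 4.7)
The plan is to adapt the penalization method of Alves--Souto \cite{AS} to the nonlocal Choquard setting. Fix $R>1$ with $\mathcal{V}(R)>\mathcal{V}_0$ (with $\mathcal{V}_0$ to be determined). I would modify the nonlinearity outside $B_R$ by setting
\[
\tilde f(x,s) = \begin{cases} f(s), & |x|\leq R, \\[2pt] \min\!\bigl\{f(s),\ \tfrac{V(x)}{2c_0}\,s\bigr\}, & |x|>R, \end{cases}
\]
and $\tilde F(x,s)=\int_0^s \tilde f(x,t)\,dt$. The penalized energy
\[
J(u)=\tfrac{1}{2}\int(|\nabla u|^2+V(x)u^2)\,dx - \tfrac{1}{2}\iint \tfrac{\tilde F(x,u(x))\,\tilde F(y,u(y))}{|x-y|^\mu}\,dx\,dy
\]
is then naturally defined on the Hilbert space $E=\{u\in D^{1,2}(\R^N): \int V u^2<\infty\}$ with norm $\|u\|^2 = \int(|\nabla u|^2 + V u^2)$, and the truncation makes the exterior part of the nonlinear term subordinate to a fraction of $\int V u^2$, even when $V$ vanishes.

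Next, I would obtain a nontrivial critical point of $J$ via the Mountain Pass Theorem. Combining the growth bounds \eqref{Growth} with the Hardy--Littlewood--Sobolev inequality at conjugate exponent $\tfrac{2N}{2N-\mu}$ yields
\[
\iint \tfrac{\tilde F(x,u)\tilde F(y,u)}{|x-y|^\mu}\,dx\,dy \leq C\Bigl(\|u\|_{L^{2^*}}^{2(2N-\mu)/(N-2)} + \|u\|_{L^{2Np/(2N-\mu)}}^{2p}\Bigr),
\]
where the first term is controlled by the $D^{1,2}$-seminorm thanks to $q\geq 2^*$, and the second by Sobolev since $p<2(N-\mu)/(N-2)$ implies $2Np/(2N-\mu)\in(2,2^*)$. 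This gives $J(u)\geq \alpha\|u\|^2$ on a small sphere, while $(f_3)$ with $\theta>2$ produces a direction along which $J\to -\infty$ and, via the symmetrized computation $J(u_n)-\tfrac{1}{\theta}\langle J'(u_n),u_n\rangle \geq (\tfrac12-\tfrac1\theta)\|u_n\|^2$, also furnishes boundedness of a Cerami sequence. Compactness of the nonlocal term in this strictly subcritical range (using $p<2(N-\mu)/(N-2)$ and local compactness of $E\hookrightarrow L^r_{\mathrm{loc}}$) then provides a nontrivial weak limit $u\in E$ solving the penalized equation.

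The final and most delicate step is to verify that $u$ in fact solves $(SNE)$. This reduces to the pointwise bound $f(u(x))\leq \tfrac{V(x)}{2c_0}u(x)$ for $|x|\geq R$, which follows from a decay estimate of the form $u(x)\leq C|x|^{-(N-2)}$ outside $B_R$. The penalization yields $-\Delta u + V(x) u \leq (|x|^{-\mu}*\tilde F(u))\,\tfrac{V(x)}{2c_0}\,u$ in the exterior, which, combined with control of the convolution, gives $-\Delta u +\tfrac12 V(x) u\leq 0$ there. Comparison with the radial supersolution $x\mapsto A|x|^{-(N-2)}$ (harmonic in $\R^N\setminus\{0\}$) then provides the decay, and the quantitative threshold $\mathcal{V}_0=\mathcal{V}_0(m,\theta,p,\mu,c_0)$ is precisely the margin needed to match boundary data on $\partial B_R$ (where $m$ enters through an interior $L^\infty$ bound on $u$) and to absorb the convolution factor using the decay exponent $(q-2)(N-2)$ built into $\mathcal{V}(R)$.

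The main obstacle is that, unlike in the local theory, the factor $(|x|^{-\mu}*\tilde F(u))(x)$ at a point $x$ far from the origin couples to the (uncontrolled) values of $u$ inside $B_R$. Splitting this convolution into an interior piece, estimated by $|x|^{-\mu}\|\tilde F(u)\|_{L^1(B_R)}$, and an exterior piece, estimated by HLS together with the smallness of $u$ outside $B_R$, is the crucial technical step; ensuring that both pieces are strictly smaller than the $\tfrac{V(x)}{2c_0}$ threshold is what forces the explicit dependence of $\mathcal{V}_0$ on $m,\theta,p,\mu,c_0$.
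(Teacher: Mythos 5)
Your overall strategy coincides with the paper's: del Pino--Felmer penalization outside $B_R$, mountain pass for the truncated functional, a uniform $L^\infty$ bound, comparison with the harmonic barrier $A|x|^{-(N-2)}$ in the exterior, and finally the condition $\mathcal{V}(R)>\mathcal{V}_0$ to undo the truncation. Two steps as written, however, contain genuine gaps.

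First, in the mountain-pass geometry you bound the nonlocal term by $C\|u\|_{L^{2Np/(2N-\mu)}}^{2p}$ and claim this is handled ``by Sobolev'' because $2Np/(2N-\mu)\in(2,2^*)$. Since $V$ vanishes at infinity, $E$ embeds continuously only into $L^{2^*}(\mathbb{R}^N)$; there is no global embedding of $E$ into $L^{r}(\mathbb{R}^N)$ for $r<2^*$, so that term is not controlled by any power of $\|u\|$. The paper avoids this by using only the bound $|F(s)|\le 2c_0|s|^{(2N-\mu)/(N-2)}$ (a consequence of $(f_1)$ with $q\ge 2^*$ together with $(f_3)$), which after Hardy--Littlewood--Sobolev yields $C\|u\|_{L^{2^*}}^{2(2N-\mu)/(N-2)}\le C\|u\|^{2(2N-\mu)/(N-2)}$ with exponent strictly larger than $2$. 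Your estimate should be replaced by this one; the subcritical exponent $p$ is needed elsewhere, not here.

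Second, you fix the penalization threshold as $\frac{V(x)}{2c_0}s$ a priori. The correct threshold is $\frac{V(x)}{\ell}s$ with $\ell$ chosen only \emph{after} proving that the convolution factor $K(u)(x)=|x|^{-\mu}\ast G(\cdot,u)$ is uniformly bounded in $L^\infty(\mathbb{R}^N)$ over the energy ball $\mathcal{B}=\{\|u\|^2\le \frac{2\theta}{\theta-2}(d+1)\}$; this is the paper's Lemma \ref{BNT}, obtained by splitting the convolution at $|x-y|=1$ and using $|G(x,s)|\le c_0|s|^{p}$ near the singularity (H\"older with $t=2^*/p>N/(N-\mu)$, which is exactly where $0<\mu<\frac{N+2}{2}$ and $p<\frac{2(N-\mu)}{N-2}$ enter) and $|G(x,s)|\le c_0|s|^{2^*}$ away from it. One then takes $\ell_0\ge 2\sup_{u\in\mathcal{B}}|K(u)|_{L^\infty}$, so that in the exterior $K(u)\,g(x,u)u\le \frac12 V(x)u^2$; this single inequality drives both the tail estimate needed for the $(PS)_{c_V}$ condition and the comparison computation $\int|\nabla w|^2\le\int_{|x|\ge R}\big(\frac{1}{\ell_0}K(u)-1\big)V(x)uw\le 0$. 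With your choice $2c_0$ there is no reason that $|K(u)|_{L^\infty}\le c_0$, so your asserted exterior inequality $-\Delta u+\frac12V(x)u\le 0$ does not follow; moreover your plan to make the interior piece of the convolution ``smaller than the $\frac{V(x)}{2c_0}$ threshold'' compares a constant bound on $K(u)$ with a pointwise quantity of different nature. The repair is structural rather than technical: establish the uniform $L^\infty$ bound on $K(u)$ first (your interior/exterior splitting is then unnecessary), let the truncation parameter $\ell_0$ depend on it, and this constant then propagates into $\mathcal{V}_0=c_0\ell_0 M_0^{q-2}$, where $M_0$ is the Brezis--Kato $L^\infty$ bound through which $m$ enters.
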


The constant $\mathcal{V}_0$ is a technical constant, which will appear in the proof of the main result, see Section 3. Next, we will introduce an example of potential $V$ for Theorem \ref{MRS}. \\

\noindent {\bf Example 1:} Let
$$
V(x)=
\left\{
\begin{array}{l}
\phi(x), \quad \mbox{if} \quad |x|\leq 2, \\
\mbox{}\\
\mathcal{V}_02^{(q-2)(N-2)+1}|x|^{-(q-2)(N-2)}, \quad \mbox{if} \quad |x|\geq 2, \\
\end{array}
\right.
$$
where $\phi:\overline{B}_2(0) \to [0,+\infty)$ is a continuous, which makes continuous $V$. Then we know that
$$
\mathcal{V}(2)= 2 \mathcal{V}_0,
$$
and so,
$$
\mathcal{V}(2) > \mathcal{V}_0.
$$

If the potential $V$ is a radial function, that is,
$$
V(x)=V(|x|), \,\,\, \forall x \in \mathbb{R}^{N},
$$
we define the function $\mathcal{W}:(1,+\infty) \to [0, \infty)$ by
\begin{equation} \label{WR1}
\mathcal{W}(R)=\inf_{|x|\geq R}|x|^{\frac{4-\mu}{2}}V(x)
\end{equation}
and we can apply the arguments in Theorem \ref{MRS} to treat the homogeneous case with nonlinearity
$$
f(t)=|t|^{\frac{4-\mu}{N-2}}t, \,\,\,\,\,\, \forall t \in \mathbb{R}, \eqno{(f_4)}
$$
where $0<\mu < \min\{\frac{N+2}{2}, 4\}$.\\

Involving this situation, we have the following existence result.
\begin{thm}\label{MRS2}
Assume that $0<\mu < \min\{\frac{N+2}{2}, 4\}$, $V$ is a radial function and condition $(f_4)$ holds. There exists a constant  $\mathcal{W}_0=\mathcal{W}_0(m,\mu)$ such that, if $\mathcal{W}(R)>\mathcal{W}_0$ for some $R>1$, then problem $(SNE)$ has a positive solution.
\end{thm}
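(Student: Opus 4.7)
The plan is to parallel the penalization scheme that drives Theorem~\ref{MRS}, substituting the radial Strauss-type decay for the more delicate outer comparison estimates used in the non-radial case and exploiting the fact that with $f(s)=|s|^{a}s$, $a:=(4-\mu)/(N-2)$, the primitive $F(s)=\frac{1}{a+2}|s|^{a+2}$ sits exactly at the Hardy--Littlewood--Sobolev critical exponent $(2N-\mu)/(N-2)$. I would work in the radial subspace
\[
E_{rad}=\Bigl\{u\in D^{1,2}(\R^N)\,:\,u\text{ is radial},\ \textstyle\int V u^{2}<\infty\Bigr\},\qquad \|u\|^{2}=\int|\nabla u|^{2}+Vu^{2},
\]
fix $k>\theta/(\theta-2)$ (with $\theta=a+2$), take $R>1$ with $\mathcal{W}(R)>\mathcal{W}_0$ (the threshold $\mathcal{W}_0$ to be pinned down at the end), and introduce the penalized nonlinearity
\[
\tilde f(x,s)=\begin{cases} f(s), & |x|\le R,\\ \min\{f(s),\,V(x)s/k\}, & |x|>R,\end{cases}\qquad \tilde F(x,s)=\int_{0}^{s}\tilde f(x,t)\,dt,
\]
together with the functional
\[
I_{\ast}(u)=\tfrac12\|u\|^{2}-\tfrac12\iint\frac{\tilde F(x,u(x))\,\tilde F(y,u(y))}{|x-y|^{\mu}}\,dx\,dy.
\]
The outer truncation builds into $\tilde F$ an Ambrosetti--Rabinowitz-type condition and guarantees that any $(PS)$ sequence of $I_{\ast}$ is bounded in $E_{rad}$.

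Next I would extract a nontrivial critical point of $I_{\ast}$ by the Mountain Pass theorem on $E_{rad}$. Radial symmetry restores local compactness (Strauss-type decay combined with the $V$-weighted $L^{2}$ control in the norm), while the outer penalization $\tilde f\le V u/k$ confines the HLS-critical part of the nonlocal term to the bounded ball $B_{R}$, where it is compact. A bounded $(PS)_{c_{\ast}}$ sequence then converges to a critical point $u\in E_{rad}$, provided the Mountain Pass level $c_{\ast}$ lies strictly below the threshold coming from the extremals of the critical Choquard limit problem on $\R^{N}$; this strict-inequality check, standard since $V\not\equiv 0$ near the support of a rescaled extremal, is the subtlest compactness ingredient.

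Finally, to show that this $u$ solves $(SNE)$ I have to verify $\tilde f(x,u(x))=f(u(x))$ a.e., which (since $f(s)/s=|s|^{a}$) reduces to
\[
|u(x)|^{a}\le V(x)/k\qquad\text{for }|x|\ge R.
\]
The Strauss radial decay $|u(x)|\le C_N\|u\|_{D^{1,2}}|x|^{-(N-2)/2}$ raised to the power $a$ gives
\[
|u(x)|^{a}\le C_N^{a}\|u\|^{a}\,|x|^{-(4-\mu)/2},
\]
while $\mathcal{W}(R)>\mathcal{W}_0$ yields $V(x)\ge\mathcal{W}_0|x|^{-(4-\mu)/2}$ on $\{|x|\ge R\}$. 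The exponents match exactly---this is precisely why the weight $|x|^{(4-\mu)/2}$ is chosen in \eqref{WR1}. Combined with an \emph{a priori} bound $\|u\|\le M(m,\mu)$ obtained from the Mountain Pass level $c_{\ast}$, one gets $|u(x)|^{a}\le (kC_N^{a}M^{a}/\mathcal{W}_{0})\,V(x)/k$, so the choice $\mathcal{W}_{0}:=kC_N^{a}M^{a}=\mathcal{W}_{0}(m,\mu)$ closes the argument. The main obstacle throughout is the HLS critical growth of the nonlocal term: it is handled jointly by the penalization (which excises criticality outside $B_{R}$) and by the Strauss decay (which quantifies, using the radial hypothesis on $V$, exactly how small $u$ must be outside $B_R$).
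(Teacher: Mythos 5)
Your overall architecture coincides with the paper's: penalize outside $B_R$ \`a la del Pino--Felmer, solve the auxiliary problem in $E_{rad}$, and then use the radial (Strauss/Ni) decay $|u(x)|\le C\|u\|_{D^{1,2}}|x|^{-(N-2)/2}$ to match the exponent $(4-\mu)/2$ in \eqref{WR1} and deactivate the penalization when $\mathcal{W}(R)>\mathcal{W}_0=\ell_0(CA)^{(4-\mu)/(N-2)}$; that final computation of yours is exactly the paper's. But your compactness step contains a genuine error. You propose to recover convergence of the $(PS)_{c_*}$ sequence by showing that $c_*$ lies \emph{strictly below} the threshold attached to the extremals of the critical Choquard limit problem, calling this check ``standard since $V\not\equiv0$''. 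This cannot work as stated: since $V\ge0$, the penalized functional dominates the $V\equiv0$ limit functional, so the mountain pass level can only go \emph{up}, and there is no subcritical perturbation available to pull it back down --- this is not a Brezis--Nirenberg situation. Likewise, the claim that the HLS-critical part is ``confined to $B_R$, where it is compact'' is false: critical growth on a bounded domain is precisely where compactness fails. The mechanism the paper actually uses is different, and you should adopt it: Lemma \ref{BNT} provides a uniform bound $\sup_{u\in\mathcal{B}}|K(u)|_{L^{\infty}}\le C$ on the convolution factor $K(u)=|x|^{-\mu}\ast G(\cdot,u)$; once that factor is frozen in $L^\infty$, the remaining pointwise nonlinearity obeys $|g(x,s)s|\le c_0|s|^{(2N-\mu)/(N-2)}$ with $(2N-\mu)/(N-2)<2^*$, so the compact embedding of $E$ into $L^{(2N-\mu)/(N-2)}_{loc}$ together with the tail estimate of Lemma \ref{PPS1} yields the $(PS)_{c_V}$ condition with no level restriction at all. (Be aware that the analogue of Lemma \ref{BNT} for $f(t)=|t|^{(4-\mu)/(N-2)}t$ needs a separate verification, since $F$ now grows like $|s|^{(2N-\mu)/(N-2)}$, which does not fall strictly below $2(N-\mu)/(N-2)$; this is one of the ``natural modifications'' the paper leaves to the reader, and the radial decay is what makes it feasible.)

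A second, smaller omission: a critical point of the penalized functional on $E_{rad}$ is a priori a weak solution only against \emph{radial} test functions. The paper closes this by invoking Palais's Principle of Symmetric Criticality to conclude $I'(u_R)=0$ on all of $E$; you should add that step (or argue directly) before asserting that $u$ solves $(SNE)$.
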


As an application, we can use Theorem \ref{MRS2} to study
$$
\left\{
\begin{array}{l}
-\Delta u +V(x)u =\Big( \frac{1}{|x|}\ast u^{5}\Big)u^{4} \quad \mbox{in} \quad \R^3,
\mbox{}\\
u(x)>0 \,\,\, \mbox{in} \,\,\, \mathbb{R}^{3}.
\end{array}
\right.
$$
where $V$ is a potential as below: \\

\noindent {\bf Example 2:} Let $\delta >0$ and
$$
V(x)=
\left\{
\begin{array}{l}
1, \quad \mbox{if} \quad |x|\leq 1, \\
\mbox{}\\
|x|^{-\frac{(4-\mu)}{2}+\delta}, \quad \mbox{if} \quad |x|\geq 1. \\
\end{array}
\right.
$$
We have that
$$
\mathcal{W}(R)= R^{\delta}, \quad \forall R>1.
$$
Hence,
$$
\lim_{R \to +\infty}\mathcal{W}(R)=+\infty,
$$
and so, for $R$  large enough
$$
\mathcal{W}(R) > \mathcal{W}_0.
$$

\noindent {\bf Example 3:} Let
$$
V(x)=
\left\{
\begin{array}{l}
2\mathcal{W}_0, \quad \mbox{if} \quad |x|\leq 1, \\
\mbox{}\\
2\mathcal{W}_0|x|^{-\frac{(4-\mu)}{2}}, \quad \mbox{if} \quad |x|\geq 1. \\
\end{array}
\right.
$$
where $\phi:\overline{B}_1(0) \to [0,+\infty)$ is a continuous, which makes continuous $V$. For the above potential, we have that
$$
\mathcal{W}(R)= 2\mathcal{W}_0, \quad \forall R>1,
$$
and so,
$$
\mathcal{W}(R) > \mathcal{W}_0, \quad \forall R>1.
$$

We now briefly outline the organization of the contents of this paper. In Section 2, we introduce an auxiliary problem, by using a penalization method due to del Pino \& Felmer \cite{DF1}. Section 3 is devoted to prove Theorem \ref{MRS}. Finally in Section 4, we consider the case where the potential $V$ is a radial function and the nonlinearity is homogeneous. By adjusting some arguments in the proof of Theorem \ref{MRS}, we can prove the result in Theorem \ref{MRS2}.

\vspace{0.5 cm}

{\bf Notations} \\

We fix the following notations, which will be used from now on. \\

\noindent $\bullet$ $C$, $C_i$ denote positive constants.\\
\noindent $\bullet$ $B_R$ denote the open ball centered at the origin with
radius $R>0$. \\
\noindent $\bullet$ $L^s(\R^N)$, for $1 \leq s \leq \infty$,
denotes the Lebesgue space with the norms
$$
| u |_s:=\Big(\int_{\R^N}|u|^s\Big)^{1/s}.
$$
\noindent $\bullet$ If $u$ is a mensurable function, we denote by $u_+$ and $u_-$ its positive and negative part respectively, i.e.,
$$
u_+(x)=\max\{u(x),0\} \quad \mbox{and} \quad u_-(x)=\max\{-u(x),0\}.
$$

\noindent $\bullet$ $C_0^{\infty}(\R^N)$ denotes  the space of the functions
infinitely differentiable with compact support in $\R^N$. \\
\noindent $\bullet$ We denote by $D^{1,2}(\R^N)$  the Hilbert space
$$
D^{1,2}(\R^N)=\{u \in L^{2^{*}}(\mathbb{R}^{N})\,:\, \nabla u \in L^{2}(\mathbb{R^{N}}) \}
$$
endowed with the inner product
$$
\left\langle u,v  \right\rangle = \int_{\mathbb{R^{N}}}\nabla u \nabla v
$$
and norm
$$
    \|u\|_{D^{1,2}}:=\left(\int_{\R^N}|\nabla u|^2\right)^{1/2}.
$$

\noindent $\bullet$ $S$ is the best constant that verifies
$$
|u|^2_{2^{*}}\leq S\int_{\R^{N}}|\nabla u|^{2},\ \ \forall u\in D^{1,2}(\R^N).
$$
\noindent $\bullet$  From the assumptions on $V$, it follows that the subspace
  $$
  E=\{u\in D^{1,2}(\R^N):
  \ \int_{\R^N} V( x)|u|^2<\infty\}
  $$
 is a Hilbert space with norm defined by
 $$
 \|u\|:=\left(\int_{\R^N}(|\nabla u|^2+ V( x)|u|^2)\right)^{1/2}.
 $$

\noindent $\bullet$ \, Let $E$ be a real Hilbert space and $I:E \to \R$ be a functional of class $\mathcal{C}^1$.
 We say that $(u_n)\subset E$ is a  Palais-Smale  sequence at level $c$ for $I$, $(PS)_c$ sequence for short, if $(u_n)$ satisfies
$$
I(u_n)\to c \,\,\, \mbox{and} \,\,\,\, I'(u_n)\to0,  \,\,\, \mbox{as} \,\,\, n\to\infty.
$$
Moreover, we say that $I$ satisfies the $(PS)_c$ condition, if any $(PS)_c$ sequence possesses a convergent subsequence. \\

\section{The penalized problem}

In this section, adapting some arguments introduced by del Pino and Felmer in \cite{DF1}, we are able to obtain the existence of nontrivial solution for $(SNE)$ by studying an auxiliary problem.

We would like to make some comments on the assumptions involving the nonlinearity $f$. The first point is that since we are looking for positive solutions,  we may suppose that
$$
f(s) = 0, \quad \forall\ s < 0.
$$
The second one is due to the application of variational methods, in this way, we must have
\begin{equation} \label{Z1}
\left|\int_{\R^N}\big(\frac{1}{|x|^\mu}\ast F(u)\big) F(u)\right| < \infty, \quad \forall\ u \in E.
\end{equation}
To observe the above integrability, it is important to recall the Hardy-Littlewood-Sobolev inequality, which will be frequently used in the paper.

\begin{Prop} \cite[Theorem 4.3]{LL}$\,\,[Hardy-Littlewood-Sobolev \ inequality]$:\\
Let $s, r>1$ and $0<\mu<N$ with $1/s+\mu/N+1/r=2$. If $g\in
L^s(\R^N)$ and $h\in L^r(\R^N)$, then there exists a sharp constant
$C(s,N,\mu,r)$, independent of $g,h$, such that
$$
\int_{\R^N}\int_{\R^N}\frac{g(x)h(y)}{|x-y|^\mu}\leq
C(s,N,\mu,r) |g|_s|h|_r.
$$
\end{Prop}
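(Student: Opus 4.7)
The plan is to reduce the bilinear inequality to a linear mapping property for the Riesz-type operator
$$
I_\mu h(x):=\int_{\R^N}|x-y|^{-\mu}h(y)\,dy,
$$
and then establish its boundedness from a weak-type estimate together with Marcinkiewicz interpolation. By duality, the stated bilinear inequality with exponents $(s,r)$ satisfying $1/s+1/r+\mu/N=2$ is equivalent to the operator bound $|I_\mu h|_{s'}\le C|h|_r$, where $s'=s/(s-1)$. The scaling hypothesis translates into $1/r-1/s'=1-\mu/N\in(0,1)$, which is exactly the homogeneity required for $I_\mu:L^r\to L^{s'}$ to make sense.

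The central step is the weak-type estimate
$$
\bigl|\{x\in\R^N:|I_\mu h(x)|>t\}\bigr|\le C(|h|_r/t)^{s'},\qquad t>0.
$$
To prove it, I would split the kernel at a parameter $R>0$ to be optimized, writing $|z|^{-\mu}=\chi_{\{|z|\le R\}}|z|^{-\mu}+\chi_{\{|z|>R\}}|z|^{-\mu}$ and correspondingly $I_\mu h=I_\mu^{<}h+I_\mu^{>}h$. A dyadic-shell argument controls the near part pointwise by $|I_\mu^{<}h(x)|\le CR^{N-\mu}Mh(x)$, where $M$ is the Hardy--Littlewood maximal operator. H\"older's inequality, using that the scaling relation gives $\mu r'>N$, yields the uniform bound $|I_\mu^{>}h(x)|\le CR^{N/r'-\mu}|h|_r$. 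Choosing $R$ so that the far part is at most $t/2$ (the algebra forces $R^{N/s'}\sim|h|_r/t$) and invoking the weak $(r,r)$ maximal inequality $|\{Mh>\lambda\}|\le C(|h|_r/\lambda)^r$ on the remaining condition $Mh(x)>ct/R^{N-\mu}$ produces the weak-type bound with precisely the exponent $s'$.

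Marcinkiewicz interpolation between the weak-type estimates at two nearby exponents $r_1,r_2$, each paired with the matching $s_1',s_2'$ via the scaling relation, then upgrades this to the strong $L^r\to L^{s'}$ bound, and dualizing returns the bilinear inequality with a finite constant. The main obstacle is the sharpness of the constant $C(s,N,\mu,r)$: the interpolation procedure yields a finite but generally non-optimal value. Identifying the sharp constant, together with its extremizers (conformal images of $(1+|x|^2)^{-(2N-\mu)/2}$ in the diagonal case $s=r$), requires symmetric decreasing rearrangement combined with an analysis of conformal invariance, as originally carried out by Lieb. Since the applications in the remainder of the paper use only the qualitative inequality with some finite constant, I would present the weak-type/interpolation argument in detail and refer to \cite{LL} for the sharp value.
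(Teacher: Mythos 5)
The paper offers no proof of this Proposition at all: it is quoted verbatim as a cited result, \cite[Theorem 4.3]{LL}, and every later use of it in the paper invokes only the qualitative bound with some finite constant. Your argument is therefore not ``a different route from the paper's proof'' so much as the standard textbook proof that the paper chose to omit, and it is essentially correct. The reduction by duality to $|I_\mu h|_{s'}\le C|h|_r$, the kernel splitting at scale $R$, the dyadic-shell bound $|I_\mu^{<}h|\le CR^{N-\mu}Mh$, the H\"older bound on the tail (the scaling relation $\mu/N=1/r'+1/s'$ does give $\mu r'>N$ and the exponent bookkeeping $R^{N/s'}\sim|h|_r/t$ checks out, yielding exactly the weak-$(r,s')$ estimate), and the final Marcinkiewicz interpolation along the line $1/r-1/q=1-\mu/N$ between two flanking admissible pairs are all sound; the openness of the hypotheses $s,r>1$ guarantees room to choose $r_1<r<r_2$. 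The one point to flag is that the Proposition as stated speaks of a \emph{sharp} constant $C(s,N,\mu,r)$, which your interpolation argument does not produce; you correctly acknowledge this and note that sharpness (and the identification of extremizers in the conformal case) requires Lieb's rearrangement and conformal-invariance analysis, for which deferring to \cite{LL} is appropriate. Since the paper only ever uses the inequality qualitatively, your proof fully supports everything the Proposition is used for.
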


The above inequality guarantees that (\ref{Z1}) holds, because by $(f_3)$ and (\ref{Growth}),
\begin{equation} \label{Z2}
|F(u)| \leq 2c_0 |u|^{\frac{2N-\mu}{N-2}}, \quad \forall\ u \in E.
\end{equation}
Thereby, by Hardy-Littlewood-Sobolev inequality,
$$
\int_{\R^N}\big(\frac{1}{|x|^\mu}\ast |F(u)|\big) |F(u)|< +\infty
$$
if $F(u)\in L^t(\R^N)$ for $t>1$ and
$$
\frac2t+\frac{\mu}{N}=2,
$$
that is,
$$
t= \frac{2N}{2N - \mu}.
$$
However, by (\ref{Z2}),
$$
\int_{\mathbb{R^{N}}}|F(u)|^{t} \leq (2c_{0})^{t} \int_{\mathbb{R}^{N}}|u|^{2^{*}} < \infty, \quad \forall\ u\ \in E,
$$
shows that $F(u) \in L^{t}(\mathbb{R}^{N})$ for all $ u \in E$.

From the above commentaries, the Euler-Lagrange functional $I:E \to \mathbb{R}$ associated to $(SNE)$ given by
$$
I(u)=\frac12\|u\|^2-\frac12\int_{\R^N}\big(\frac{1}{|x|^\mu}\ast F(u)\big) F(u)
$$
is well defined and belongs to $\mathcal{C}^1(E,\R)$
with its derivative given by
$$
I'(u)\varphi=\int_{\mathbb{R}^N} (\nabla u\nabla \varphi+ V(x)u\varphi)-\int_{\R^N}\Big(  \frac{1}{|x|^\mu}\ast F(u)\Big)f(u)\varphi, \,\,\, \forall\ u, \varphi \in E.
$$
Thus, it is easy to see that the solutions of $(SNE)$ correspond to critical points of the energy functional $I$. However, whereas $I$ does not verify in  general the $(PS)$ condition, there are some difficulties to prove the existence of nontrivial critical points for it.

In order to overcome the lack of compactness of $I$, we adapt the penalization method introduced by del Pino and Felmer in \cite{DF1}. For $\ell>1$ and $R>1$ to be determined later, we set the functions
$$
\displaystyle \hat{f}(x,s):=
\left\{\begin{array}{l}
\displaystyle f(s)\ \ \ \mbox{if}\ \ \ \ell f(s)\leq V(x)s,\vspace{4mm}\\
\displaystyle \frac{V(x)}{\ell}s \ \  \mbox{if} \ \ \ \ell f(s)> V(x)s
\end{array}
\right.
$$
and
\begin{equation} \label{PNT}
g(x,s):=\mathcal{X}_{B_R}(x)f(s)+(1-\mathcal{X}_{B_R}(x))\hat{f}(x,s),
\end{equation}
where $\mathcal{X}_{B_R}$ denote the characteristic function of the ball $B_R$. Using the previous notations, let us introduce the auxiliary  problem
$$
\left\{
\begin{array}{l}
\displaystyle -\Delta u +V(x)u  =\Big(\frac{1}{|x|^\mu}\ast G(x, u)\Big)g( x, u) \,\,\,\, \mbox{in} \,\,\, \R^{N}, \\
\mbox{}\\
u\in D^{1,2}({\R}^N),
\end{array}
\right.
\eqno{(APE)}
$$
where $G(x,s):=\int_{0}^{s}g(x,\tau)d\tau$. A direct computation shows that, for all $s\in \R$, the following inequalities hold
\begin{equation}\label{PN}
\hat{f}(x,s)\leq f(s)\ \  \hbox{for all}\ \ x\in \R^N,
\end{equation}
$$
g(x,s)\leq \frac{V(x)}{\ell}s \ \  \hbox{for all}\ \ |x|\geq R,
$$

$$
G(x,s)=F(s) \ \  \hbox{for all}\ \ |x|\leq R,
$$
$$
G(x,s)\leq \frac{V(x)}{2\ell}s^2 \ \  \hbox{for all}\ \ |x|\geq R.
$$

\begin{Rem}\label{SN}
 It is easy to check that, if $u$ is a positive solution of the equation $(APE)$ with $ f(u(x))\leq V(x)u(x)$ for all $|x|\geq R$, then $g(x,u)=f(u)$, and therefore,
$u$ is indeed a solution of problem $(SNE)$.
\end{Rem}
The Euler-Lagrange functional associated to $(APE)$ is given by
$$
\Phi(u)=\frac12\|u\|^2-\Psi(u)
$$
where
$$
\Psi(u)=\frac12\int_{\R^N}\big(\frac{1}{|x|^\mu}\ast G(x,u)\big) G(x,u).
$$
 From (\ref{Growth}) and \eqref{PN}, we know that $\Phi$ is well defined and belongs to $\mathcal{C}^1(E,\R)$
with its derivative given by
$$
\Phi'(u)\varphi=\int_{\R^N} (\nabla u\nabla \varphi+ V(x)u\varphi)-\Psi'(u)\varphi, \ \ \ \ \forall u, \varphi \in E.
$$
Therefore, it is easy to see that the solutions of $(APE)$ correspond to the critical points of the energy functional $\Phi$.
\begin{lem}\label{AR2}
Assume that condition $(f_3)$ holds. Then,
$$
\Psi'(u)u\geq\theta\Psi(u)>0,\ \ \  \forall u \in E\backslash{\{0\}}.
$$
\end{lem}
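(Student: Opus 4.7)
The heart of the argument is the pointwise Ambrosetti--Rabinowitz-type estimate for the penalised pair,
$$\tfrac{\theta}{2}\, G(x,s) \;\le\; g(x,s)\, s \qquad \forall\, x\in\R^N,\ s\in\R,$$
together with $G\ge 0$. The inequality for $\Psi'(u)u$ and $\Psi(u)$ then follows by inserting this bound under the double integral defining $\Psi$ and using the positivity of the Riesz kernel $|x-y|^{-\mu}$; the only non-routine part is the case analysis needed for the pointwise estimate, and the rest is a Fubini-type manipulation.

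\textbf{Step 1 (pointwise inequality).} I would split on $|x|\le R$ versus $|x|>R$. Inside the ball the penalisation is inactive, $g(x,\cdot)=f(\cdot)$ and $G(x,\cdot)=F(\cdot)$, so the estimate reduces exactly to $(f_3)$. Outside, $g(x,s)=\hat f(x,s)$ is one of two things. From the definition, $\hat f(x,t)\le V(x)t/\ell$ for every $t$ (with equality in the penalised branch and by construction in the other), which upon integration yields $G(x,s)\le V(x)s^2/(2\ell)$; analogously $\hat f(x,t)\le f(t)$ for $t\ge 0$ gives $G(x,s)\le F(s)$ when $s\ge 0$. In the branch $g(x,s)=f(s)$ (necessarily $s\ge 0$), one invokes $(f_3)$ via the bound $G\le F$; in the branch $g(x,s)=V(x)s/\ell$ one invokes the standing assumption $\theta<4$ via $G\le V(x)s^2/(2\ell)$. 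A by-product of the same analysis is $G(x,s)\ge 0$ everywhere.

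\textbf{Step 2 (double integral and positivity).} Differentiating $\Psi$ and using the symmetry of the kernel gives
$$\Psi'(u)u \;=\; \int_{\R^N}\!\!\int_{\R^N}\frac{G(y,u(y))\,g(x,u(x))\,u(x)}{|x-y|^\mu}\,dx\,dy,$$
and since $G(\cdot,u(\cdot))\ge 0$ and $|x-y|^{-\mu}>0$, the pointwise inequality of Step 1 integrates directly to $\Psi'(u)u \ge \theta\,\Psi(u)$. For the strict positivity with $u\ne 0$, it suffices to notice that $x\mapsto G(x,u(x))$ is not identically zero: $(f_3)$ gives $F(t)>0$ for $t>0$, which handles $B_R\cap\{u>0\}$, while the explicit formula $G(x,s)=V(x)s^2/(2\ell)$ available in the penalised branch covers the complementary region wherever $V>0$. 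The double integral of a non-negative integrand that is positive on a set of positive measure, paired with the strictly positive Riesz kernel, is strictly positive, yielding $\Psi(u)>0$.
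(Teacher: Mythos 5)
Your proof is correct and takes essentially the same route as the paper: the identical case analysis (reduce to $(f_3)$ where $g=f$, and use $G(x,s)\le V(x)s^2/(2\ell)$ together with $\theta<4$ in the penalized branch) yields the pointwise bound $\tfrac{\theta}{2}G(x,s)\le g(x,s)s$, which the paper applies directly under the integral sign rather than isolating it as a separate step. If anything you are slightly more careful, since the paper never explicitly records $G\ge 0$ (needed to discard the nonnegative kernel factor) nor justifies the strict inequality $\Psi(u)>0$.
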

\begin{proof}
For all $u \in E\backslash{\{0\}}$, a direct computation gives
$$
\aligned
\frac1\theta&\Psi'(u)u-\Psi(u)\\
&=\int_{|x|\leq R}\big(\frac{1}{|x|^\mu}\ast G(x,u)\big) \big(\frac1\theta f(u)u-\frac12F(u)\big) +\int_{|x|> R}\big(\frac{1}{|x|^\mu}\ast G(x,u)\big) \big(\frac1\theta g(x,u)u-\frac12G(x,u)\big)\\
&\geq \int_{\{|x|> R\}\cap \{f(u(x))\leq V(x)u(x)\}}\big(\frac{1}{|x|^\mu}\ast G(x,u)\big) \big(\frac1\theta g(x,u)u-\frac12G(x,u)\big)\\
&\hspace{1cm}+\int_{\{|x|> R\}\cap \{f(u(x))\geq V(x)u(x)\}}\big(\frac{1}{|x|^\mu}\ast G(x,u)\big) \big(\frac1\theta g(x,u)u-\frac12G(x,u)\big)\\
&=\int_{\{|x|> R\}\cap \{f(u(x))\leq V(x)u(x)\}}\big(\frac{1}{|x|^\mu}\ast G(x,u)\big) \big(\frac1\theta f(u)u-\frac12F(u)\big)\\
&\hspace{1cm}+\int_{\{|x|> R\}\cap \{f(u(x))\geq V(x)u(x)\}}\big(\frac{1}{|x|^\mu}\ast G(x,u)\big) \big(\frac1\theta g(x,u)u-\frac12G(x,u)\big)\\
&\geq\int_{\{|x|> R\}\cap \{f(u(x))\geq V(x)u(x)\}}\big(\frac{1}{|x|^\mu}\ast G(x,u)\big) \big(\frac1\theta g(x,u)u-\frac12G(x,u)\big)\\
&\geq\int_{\{|x|> R\}\cap \{f(u(x))\geq V(x)u(x)\}}\big(\frac{1}{|x|^\mu}\ast G(x,u)\big) \big(\frac{1}{\ell\theta} -\frac{1}{4\ell}\big)V(x)u^2.
\endaligned
$$
Since $2<\theta<4$, the conclusion follows.
\end{proof}
Next, we check that $\Phi$ verifies the Mountain Pass Geometry.
\begin{lem}\label{mountain:1}
Assume that $0<\mu<N$ and conditions $(f_1)-(f_3)$ hold. Then,
\begin{itemize}
  \item[$(1)$] \quad There exist $\rho, \delta_0>0$ such that $\Phi|_{S_\rho}\geq\delta_0>0$, $\forall u\in S_\rho=\{u\in E:\|u\|=\rho\}$.
  \item[$(2)$] \quad There is $r>0$ and $e\in H^{1}_0(B_{1})$ with $\|e\|>r$ such that $\Phi(e)< 0$.
\end{itemize}
\end{lem}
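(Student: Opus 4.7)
The plan is to verify the two geometric conditions separately, noting that the penalization (\ref{PN}) only \emph{decreases} the nonlinearity ($0\le G(x,s)\le F(s)$), so upper bounds on $\Psi$ can be obtained exactly as for the unpenalized functional $I$.

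For part $(1)$ the aim is to show $\Psi(u)=o(\|u\|^2)$ as $\|u\|\to 0$. From $(f_3)$ together with (\ref{Growth}) one has the pointwise bound (\ref{Z2}), namely $|F(u)|\le\frac{2c_0}{\theta}|u|^{(2N-\mu)/(N-2)}$. Applying the Hardy-Littlewood-Sobolev inequality with $t=\frac{2N}{2N-\mu}$ gives
\begin{equation*}
\Psi(u)\;\le\;\tfrac12\int_{\R^N}\Big(\tfrac{1}{|x|^\mu}\ast F(u)\Big)F(u)\;\le\; C\,|F(u)|_t^{\,2},
\end{equation*}
and since $(2N-\mu)/(N-2)\cdot t=2^*$ one obtains $|F(u)|_t^{\,t}\le C'|u|_{2^*}^{2^*}$. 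Combining this with the Sobolev embedding $|u|_{2^*}\le S^{1/2}\|u\|_{D^{1,2}}\le S^{1/2}\|u\|$ yields $\Psi(u)\le C''\|u\|^{2(2N-\mu)/(N-2)}$. Because $0<\mu<N$, the exponent $2(2N-\mu)/(N-2)$ is strictly larger than $2$, so
\begin{equation*}
\Phi(u)\;\ge\;\tfrac12\|u\|^2-C''\|u\|^{2(2N-\mu)/(N-2)}\;\ge\;\delta_0
\end{equation*}
whenever $\|u\|=\rho$ is chosen small enough.

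For part $(2)$ I fix any nonnegative $\varphi\in C_0^\infty(B_1)\subset H^1_0(B_1)$ with $\varphi\not\equiv 0$. Since $\mathrm{supp}(\varphi)\subset B_1\subset B_R$, the penalization is inactive on $\mathrm{supp}(\varphi)$: for every $t>0$ one has $G(x,t\varphi(x))=F(t\varphi(x))$ for all $x\in\R^N$, and $F(s)>0$ for $s>0$ by $(f_3)$, so $\Psi(\varphi)>0$. I then invoke Lemma \ref{AR2} along the ray $t\mapsto t\varphi$: from $\Psi'(t\varphi)(t\varphi)\ge \theta\Psi(t\varphi)$ one derives $\frac{d}{dt}\bigl[t^{-\theta}\Psi(t\varphi)\bigr]\ge 0$ for $t>0$, hence $\Psi(t\varphi)\ge t^\theta\,\Psi(\varphi)$ for every $t\ge 1$. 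Consequently
\begin{equation*}
\Phi(t\varphi)\;\le\;\tfrac{t^2}{2}\|\varphi\|^2-t^\theta\,\Psi(\varphi)\;\longrightarrow\;-\infty\qquad\text{as }t\to\infty,
\end{equation*}
since $\theta>2$, and setting $e:=t_0\varphi$ for $t_0$ large enough yields both $\|e\|>r$ and $\Phi(e)<0$.

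The only genuinely delicate bookkeeping is in part $(1)$, where the exponent produced by chaining Hardy-Littlewood-Sobolev with the Sobolev embedding must strictly exceed $2$; this is automatic in the full range $0<\mu<N$ assumed in the lemma, so no extra restriction on $\mu$ is needed. Part $(2)$ is essentially free: the crucial observation is that any function supported inside $B_R$ is unaffected by the penalization, so the superlinear behaviour already packaged in Lemma \ref{AR2} produces a descent direction without further work.
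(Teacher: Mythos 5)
Your proposal is correct and follows essentially the same route as the paper: part $(1)$ via the bound $|G(x,u)|\leq |F(u)|\leq c_0|u|^{\frac{2N-\mu}{N-2}}$ combined with Hardy--Littlewood--Sobolev and the Sobolev embedding to get $\Psi(u)\leq C\|u\|^{\frac{2(2N-\mu)}{N-2}}$, and part $(2)$ by integrating the differential inequality coming from Lemma \ref{AR2} along a ray through a fixed nonnegative function supported in $B_1$ to obtain $\Psi(t\varphi)\geq Ct^{\theta}$ with $\theta>2$. The only cosmetic difference is that you differentiate $t^{-\theta}\Psi(t\varphi)$ where the paper integrates $\mathcal{A}'(t)/\mathcal{A}(t)\geq \theta/t$; these are the same computation.
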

\begin{proof}
(1). By \eqref{Growth},
$$
|G(x,u)|\leq|F(u)|\leq c_0|u|^{\frac{2N-\mu}{N-2}},
$$
from which it follows that
$$
\aligned
\Phi(u) &\geq  \frac12\|u\|^2-C_1(\int_{\R^N}\big(\frac{1}{|x|^\mu}\ast |u|^{\frac{2N-\mu}{N-2}}\big) |u|^{\frac{2N-\mu}{N-2}},
\endaligned
$$
and so, by Hardy-Littlewood-Sobolev inequality,
$$
\aligned
\Phi(u) &\geq  \frac12\|u\|^2-C_1\|u\|^{\frac{2(2N-\mu)}{N-2}}.
\endaligned
$$
Since $\frac{(2N-\mu)}{N-2}>1$, the conclusion $(1)$ follows if we choose $\rho$ small enough.

(2) \quad  Fixing $u_0 \in H^{1}_0(B_{R_0})$ with $u_0^{+}(x)=\max\{u_0(x),0\} \not= 0$, we set
$$
\mathcal{A}(t)=\Psi(\frac{tu_0}{\|u_0\|})>0 \,\,\ \mbox{for} \,\,\, t>0.
$$
By direct calculus,
$$
\frac{\mathcal{A}'(t)}{\mathcal{A}(t)}\geq \frac{\theta}{t} \,\,\, \mbox{for all} \,\,\, t>0.
$$
Hence, integrating the above inequality over $[1, s\|u_0\|]$ with $s>\frac{1}{\|u_0\|}$,  we find
$$
\Psi(su_0)\geq \Psi(\frac{u_0}{\|u_0\|})\|u_0\|^{\theta} s^{\theta}.
$$
Therefore
$$
\Phi(su_0)\leq C_1 s^2-C_2s^{\theta} \,\,\, \mbox{for} \,\,\ s > \frac{1}{\|u_0\|},
$$
and conclusion $(2)$ holds for $e=s u_0$ with $s$ large enough.
\end{proof}
Applying the Mountain Pass theorem without (PS) condition \cite{MW}, we know that there exists a $(PS)_{c_V}$ sequence $(u_n) \subset E$ such that
$$
\Phi'(u_n)\to0 \,\,\, \mbox{and} \,\,\, \quad \Phi(u_n)\to
{c_V},
$$
where $c_V$ is the Mountain Pass level characterized by
$$
0<c_V:=\inf_{\gamma\in \Gamma} \max_{t\in [0,1]}
\Phi(\gamma(t))\leq\inf_{u\in E\backslash\{0\}} \max_{t\geq 0}
\Phi(tu)
$$
with
$$
\Gamma:=\{\gamma\in \mathcal{C}^1([0,1], E):\gamma(0)=0\ \ \hbox{and}   \ \  \Phi(\gamma(1))<0\}.$$
Hence, from the proof of Lemma \ref{mountain:1},
\begin{equation} \label{m2}
0<c_V\leq d:=\inf_{u\in H^1_0(B_{1})\backslash\{0\}} \max_{t\geq 0}
\tilde{\Phi}(tu),
\end{equation}
where
$$
\tilde{\Phi}(u)=\frac12\int_{B_{1}}(|\nabla u|^2+mu^2)-\frac{1}{2}\int_{B_{1}}\int_{B_{1}}\frac{F(u(x))F(u(y))}{|x-y|^\mu},
$$
and $m$ is given in (\ref{m}). Here, it is important to observe that $d$ is independent of the choice of $\ell$ and $R$.

\begin{lem}\label{BD}
Assume that $0<\mu<N$ and conditions $(f_1)-(f_3)$ hold. Then, the $(PS)_{c_V}$ sequence $(u_n)$
is bounded by a constant independent of the choice of $\ell$ and $R$.
\end{lem}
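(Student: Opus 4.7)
The plan is to exploit the Ambrosetti–Rabinowitz type inequality from Lemma \ref{AR2} in the standard way, taking the linear combination $\Phi(u_n)-\frac{1}{\theta}\Phi'(u_n)u_n$ of the Palais–Smale information. Writing out
\[
\Phi(u_n)-\tfrac{1}{\theta}\Phi'(u_n)u_n
=\Bigl(\tfrac{1}{2}-\tfrac{1}{\theta}\Bigr)\|u_n\|^2
+\Bigl(\tfrac{1}{\theta}\Psi'(u_n)u_n-\Psi(u_n)\Bigr),
\]
Lemma \ref{AR2} ensures that the second bracket is nonnegative for every $u_n\in E$, so the whole expression is bounded below by $(\tfrac{1}{2}-\tfrac{1}{\theta})\|u_n\|^2$.

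On the other hand, the Palais–Smale hypothesis gives $\Phi(u_n)=c_V+o(1)$ and $|\Phi'(u_n)u_n|\le \|\Phi'(u_n)\|_{E^*}\|u_n\|=o(1)\|u_n\|$. Hence for all $n$ large,
\[
\Bigl(\tfrac{1}{2}-\tfrac{1}{\theta}\Bigr)\|u_n\|^2
\le c_V+1+o(1)\|u_n\|.
\]
Since $\theta>2$, the coefficient on the left is strictly positive, and this quadratic-versus-linear inequality in $\|u_n\|$ forces $(u_n)$ to be bounded in $E$, with a bound that depends only on $\theta$ and on an upper estimate for $c_V$.

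To upgrade to a bound independent of $\ell$ and $R$, I would invoke the estimate $c_V\le d$ recorded in \eqref{m2}, where $d$ is defined using the truncated functional $\tilde\Phi$ on $H^1_0(B_1)$ with the constant weight $m$ from \eqref{m}. The functional $\tilde\Phi$ and the test class $H^1_0(B_1)\setminus\{0\}$ do not involve $\ell$ or $R$ at all, so $d=d(m,\mu,f)$ is an absolute constant once the data of the problem are fixed. The resulting bound
\[
\|u_n\|^2\le \frac{2\theta}{\theta-2}\,(d+1+o(1)\|u_n\|)
\]
therefore yields a uniform constant $M=M(m,\theta,\mu,f)$ with $\|u_n\|\le M$, completing the proof. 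The only mildly delicate point is verifying that Lemma \ref{AR2} can be applied pointwise in $n$ without any further smallness assumption on $\ell$ or $R$; but that lemma is stated for every $u\in E\setminus\{0\}$ and uses only $2<\theta<4$, so no such restriction enters.
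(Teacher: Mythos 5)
Your proposal is correct and follows essentially the same route as the paper: combine $\Phi(u_n)-\frac{1}{\theta}\Phi'(u_n)u_n\ge(\frac12-\frac1\theta)\|u_n\|^2$ from Lemma \ref{AR2} with the $(PS)_{c_V}$ information, then use $c_V\le d$ from \eqref{m2} to make the bound independent of $\ell$ and $R$. Your extra care with the $o(1)\|u_n\|$ term and the remark that Lemma \ref{AR2} needs no smallness of $\ell$ or $R$ are fine but not a departure from the paper's argument.
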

\begin{proof}
By Lemma \ref{AR2},
$$
\Phi(u_n)-\frac{1}{\theta}\Phi'(u_n)u_n\geq (\frac12-\frac{1}{\theta})\|u_n\|^2,
$$
which means $(u_n)$ is bounded in $E$. Moreover, we may assume that
$$
\|u_n\|^2\leq \frac{2\theta}{\theta-2}(c_V+1)\leq \frac{2\theta}{\theta-2}(d+1) \,\,\,\, \forall n \in \mathbb{N},
$$
showing the lemma, because $d$ is independent of the choice of $\ell$ and $R$.
\end{proof}

Before proving the next lemma, we need to fix some notations. In what follows,
\begin{equation} \label{B}
\mathcal{B}:=\Big\{u \in E:\|u\|^2\leq \frac{2\theta}{\theta-2}(d+1)\Big\}
\end{equation}
and
$$
K(u)(x):=\frac{1}{|x|^\mu}\ast G(x, u).
$$
With the above notations, we are able to show the ensuing estimate

\begin{lem}\label{BNT}
Assume $0<\mu<\frac{N+2}{2}$ and conditions $(f_1)-(f_3)$ holds. Then, there exists $\ell_0>0$, which is independent of $R$, such that
$$
\frac{\sup_{u\in\mathcal{B}}|K(u)(x)|_{L^{\infty}(\R^N)}}{\ell_0}<\frac12.
$$
\end{lem}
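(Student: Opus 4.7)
The strategy is to bound $|K(u)|_{L^\infty(\R^N)}$ uniformly on $\mathcal{B}$ by a constant $\Lambda$ depending only on $\theta, \mu, p, c_0, d$ (and hence independent of $R$ and $\ell$), after which setting $\ell_0 := 3\Lambda$ yields $\sup_{u\in\mathcal{B}}|K(u)|_{L^\infty}/\ell_0 \leq 1/3 < 1/2$. Since $|G(y,u)| \leq |F(u(y))|$ pointwise by the penalization \eqref{PN}, it suffices to control $|\cdot|^{-\mu} \ast |F(u)|$ at every point. The natural decomposition splits the Riesz kernel according to the unit ball:
$$
|K(u)(x)| \leq \int_{|x-y| \leq 1}\frac{|F(u(y))|}{|x-y|^\mu}\,dy + \int_{|x-y| > 1}\frac{|F(u(y))|}{|x-y|^\mu}\,dy,
$$
and I would estimate each half by H\"older's inequality against $|\cdot|^{-\mu}$ on the corresponding region.

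The near integral requires $|\cdot|^{-\mu}\chi_{B_1} \in L^{a_0'}(\R^N)$, i.e.\ $a_0 > N/(N-\mu)$; the far integral requires $|\cdot|^{-\mu}\chi_{B_1^c} \in L^{b_0'}(\R^N)$, i.e.\ $b_0 < N/(N-\mu)$. I would place $F(u)$ in both Lebesgue spaces using only the embedding $u \in L^{2^*}(\R^N)$. From $(f_3)$ and \eqref{Growth}, $|F(u)| \leq \frac{2c_0}{\theta}|u|^{(2N-\mu)/(N-2)}$ pointwise, which yields $|F(u)|_{b_0} \leq C|u|_{2^*}^{(2N-\mu)/(N-2)}$ with $b_0 := 2N/(2N-\mu)$, and a direct algebraic check confirms $b_0 < N/(N-\mu)$. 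Similarly $|F(u)| \leq \frac{2c_0}{\theta}|u|^p$ gives $|F(u)|_{a_0} \leq C|u|_{2^*}^p$ for $a_0 := 2^*/p$, and the hypothesis $p < 2(N-\mu)/(N-2)$ is precisely what forces $a_0 > N/(N-\mu)$.

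Combining the two H\"older estimates produces
$$
|K(u)(x)| \leq C_1|u|_{2^*}^{(2N-\mu)/(N-2)} + C_2|u|_{2^*}^p,
$$
with $C_1, C_2$ depending only on $\mu, p, c_0, \theta$. Invoking the Sobolev inequality $|u|_{2^*} \leq S^{1/2}\|u\|$ and the definition of $\mathcal{B}$ then gives a uniform bound $\Lambda$ independent of $x$, $R$, and $\ell$. The main subtlety is verifying that the two dual exponents $a_0, b_0$ straddle the critical threshold $N/(N-\mu)$; both inequalities follow from the hypotheses but deserve an explicit check, and the restriction $\mu < (N+2)/2$ enters precisely to guarantee that the admissible range of $p$ is nonempty. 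Everything beyond this is a routine application of H\"older/Young in the Hardy--Littlewood--Sobolev framework.
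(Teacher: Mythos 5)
Your proposal is correct and follows essentially the same route as the paper: split the convolution at $|x-y|=1$, estimate the near part by H\"older with exponent $2^*/p$ (whose admissibility $2^*/p>N/(N-\mu)$ is exactly the hypothesis $p<\tfrac{2(N-\mu)}{N-2}$, made nonvacuous by $\mu<\tfrac{N+2}{2}$), and control the far part through the uniform $L^{2^*}$ bound coming from $\mathcal{B}$ and Sobolev. The only cosmetic difference is in the far region, where the paper simply bounds $|x-y|^{-\mu}\le 1$ and integrates $|u|^{2^*}$, while you apply a second H\"older estimate with dual exponent $2N/\mu$; both give the same $R$- and $\ell$-independent constant.
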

\begin{proof}
From the definition of $G$,
$$
|G(x, u)|\leq|F(u)|,
$$
and so,
$$
|G(x, u)|\leq c_0|s|^{2^*}\ \  \hbox{and}\ \ |G(x, u)|\leq c_0|s|^{p}.
$$
Thereby,
$$\aligned
|K(u)(x)|&=\Big|\int_{\R^N}\frac{G(x, u)}{|x-y|^\mu}dy\Big|\\
&\leq \Big|\int_{|x-y|\leq1}\frac{G(x, u)}{|x-y|^\mu}dy\Big|+\Big|\int_{|x-y|\geq1}\frac{G(x, u)}{|x-y|^\mu}dy\Big|\\
&\leq c_0\int_{|x-y|\leq1}\frac{|u|^{p}}{|x-y|^\mu}dy+c_0\int_{|x-y|\geq1}|u|^{2^*}dy\\
&\leq c_0\int_{|x-y|\leq1}\frac{|u|^{p}}{|x-y|^\mu}dy+C_1.
\endaligned
$$
Choosing $t=\frac{2^*}{p}>\frac{N}{N-\mu}$ , it follows from H\"{o}lder inequality,
$$\aligned
\int_{|x-y|\leq1}\frac{|u|^{p}}{|x-y|^\mu}dy&\leq \big(\int_{|x-y|\leq1}|u|^{2^*}dy\big)^{\frac{1}{t}}\big(\int_{|x-y|\leq1}\frac{1}{|x-y|^{\frac{t\mu}{t-1}}}\big)^{\frac{t-1}{t}}\\
&\leq  C_1\big(\int_{|r|\leq1}{|r|^{N-1-\frac{t\mu}{t-1}}}dr\big)^{\frac{t-1}{t}}.
\endaligned
$$
Once $N-1-\frac{t\mu}{t-1}>-1$, there is $C_2>0$ such that
$$
\int_{|x-y|\leq1}\frac{|u|^{p}}{|x-y|^\mu}\leq C_2 \quad \forall x  \in \mathbb{R}^N.
$$
From this, there exists $\ell_0>0$ verifying
\begin{equation} \label{PC}
\frac{\sup_{u\in\mathcal{B}}|K(u)(x)|_{L^{\infty}(\R^N)}}{\ell_0}\leq\frac12.
\end{equation}
\end{proof}
From now on, we take $\ell \geq \ell_0$ and consider the penalized problem with nonlinearity defined in \eqref{PNT}.

\begin{lem}\label{PPS1}
Assume that $0<\mu<\frac{N+2}{2}$ and conditions $(f_1)-(f_3)$ hold. Then,  the  $(PS)_{c_V}$ sequence $(u_n)$ satisfies the following property:  For each $\vr>0$ there exists $r=r(\vr)>R$ verifying
$$
\limsup_{n\to\infty}\int_{{\R}^N\backslash B_{2r}} (|\nabla u_n|^2+ V(x) |u_n|^2)<\vr.
$$
\end{lem}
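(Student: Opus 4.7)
The plan is a cutoff-test-function argument in the spirit of the del Pino--Felmer penalization method as adapted by Alves--Souto. Fix $r>R$ and let $\eta_r\in C^\infty(\R^N,[0,1])$ satisfy $\eta_r\equiv 0$ on $B_r$, $\eta_r\equiv 1$ on $\R^N\setminus B_{2r}$, and $|\nabla \eta_r|\le C/r$. Since $\|\eta_r u_n\|$ is uniformly bounded in $n$ for each fixed $r$ (as $(u_n)$ is bounded in $E$ by Lemma~\ref{BD}) and $\Phi'(u_n)\to 0$ in $E^*$, testing against $\eta_r u_n$ yields
$$
\int \eta_r|\nabla u_n|^2+\int V(x)u_n^2\eta_r=\int K(u_n)\,g(x,u_n)\,u_n\,\eta_r-\int u_n\nabla u_n\cdot\nabla \eta_r+o_n(1).
$$

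Next I would exploit the penalization. Since $\mathrm{supp}(\eta_r)\subset \R^N\setminus B_R$, the construction of $g$ yields $g(x,u_n)\le V(x)u_n/\ell$ there, while Lemma~\ref{BNT} (together with $\ell\ge\ell_0$) gives $\|K(u_n)\|_\infty\le \ell/2$. Multiplying these two bounds absorbs the nonlinear term into half of the $V$-term,
$$
\int K(u_n)\,g(x,u_n)\,u_n\,\eta_r\le \tfrac12\int V(x)u_n^2\eta_r,
$$
and since $\eta_r\equiv 1$ on $\R^N\setminus B_{2r}$,
$$
\int_{\R^N\setminus B_{2r}}|\nabla u_n|^2+\tfrac12\int_{\R^N\setminus B_{2r}}V(x)u_n^2\le -\int u_n\nabla u_n\cdot\nabla \eta_r+o_n(1).
$$

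The remaining step is to estimate the cutoff error. By Cauchy--Schwarz and $\|\nabla u_n\|_2\le C$,
$$
\Big|\int u_n\nabla u_n\cdot\nabla \eta_r\Big|\le \frac{C}{r}\|u_n\|_{L^2(A_r)},\qquad A_r:=B_{2r}\setminus B_r.
$$
Passing to a subsequence with $u_n\rightharpoonup u$ in $D^{1,2}(\R^N)$, Rellich--Kondrachov on the bounded annulus $A_r$ gives $u_n\to u$ in $L^2(A_r)$. Combined with Holder's inequality (exponents $2^*/2$ and $N/2$) and $|A_r|^{2/N}\le Cr^2$, this yields $\|u\|_{L^2(A_r)}\le Cr\|u\|_{L^{2^*}(A_r)}$, so
$$
\limsup_n\Big|\int u_n\nabla u_n\cdot\nabla \eta_r\Big|\le C\|u\|_{L^{2^*}(A_r)}\to 0 \quad \text{as }r\to\infty,
$$
since $u\in L^{2^*}(\R^N)$ by the Sobolev embedding. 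Choosing $r$ large enough and doubling the estimate to absorb the factor $1/2$ in front of the $V$-term produces the desired bound.

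The main obstacle is that $V$ vanishes at infinity, so $\int_{A_r}u_n^2$ cannot be controlled by $\|u_n\|^2$ directly. The essential ingredient that bypasses this is the pointwise bound $\|K(u_n)\|_\infty\le \ell/2$ from Lemma~\ref{BNT} combined with the penalization bound $g(x,u_n)\le V(x)u_n/\ell$, whose product is \emph{independent} of how small $V$ becomes and absorbs exactly half of the $V$-term. The cutoff error is then dealt with by passing to a weak $D^{1,2}$ limit and using the $L^{2^*}$-tail of $u$.
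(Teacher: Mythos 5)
Your proposal is correct and follows essentially the same route as the paper: test $\Phi'(u_n)$ against $\eta_r u_n$, absorb the nonlocal term into half of the $V$-term via the penalization bound $g(x,u_n)\le V(x)u_n/\ell$ combined with the uniform $L^\infty$ bound on $K(u_n)$ from Lemma \ref{BNT}, and control the cutoff error on the annulus by compact embedding plus H\"older, trading $L^2(A_r)$ for $|A_r|^{1/N}\|u\|_{L^{2^*}(A_r)}$ so the $1/r$ cancels and the $L^{2^*}$-tail of the weak limit $u$ does the rest. The only (cosmetic) difference is that the paper fixes $r$ in advance so that the tail of $|u|^{2^*}$ is small, whereas you let $r\to\infty$ at the end.
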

\begin{proof}
From Lemma \ref{BD},
$$
\|u_n\|^2\leq \frac{2\theta}{\theta-2}(d+1) \quad \forall n \in \mathbb{N},
$$
where $d$ is independent of the choice of $\ell$ and $R$. Therefore, we can suppose that there exists $u\in E$ such that $u_n\rightharpoonup u$ in $E$.
Thus, for each $\vr>0$, there exists $r>R>0$, such that
\begin{equation} \label{EST1}
\omega^{\frac1N}_N\|u_n\|\big(\int_{r\leq |x|\leq 2r}|u|^{2^*}\big)^{\frac{1}{2^*}}<\frac{\vr}{8},
\end{equation}
where $\omega_N$ is the volume of the unitary ball in $\R^N$.

 Let $\eta_r\in C^{\infty}(B^c_r)$ be such that  $\eta_r(x)=1$ if $x\notin B_{2r}(0)$, with $0\leq \eta_r(x)\leq 1$ and $|\nabla \eta_r(x)|\leq \frac{2}{r}$. Note that
$$
\aligned
\int_{\R^N} \eta_r(|\nabla u_n|^2+ V( x) |u_n|^2)&=\Phi'(u_n)(u_n\eta_r)+\int_{\R^N}\big(\frac{1}{|x|^\mu}\ast G( x, u_n)\big)g( x, u_n)u_n\eta_r\\
&\hspace{4mm}-\int_{\R^N}u_n\nabla u_n\nabla\eta_r.
\endaligned
$$
Since $(u_n\eta_r)$ is bounded in $E$, it follows that $\Phi'(u_n)(u_n\eta_r)=o_n(1)$. Moreover, recalling that $\eta_r(x)=0$ in $B_R$,  we obtain
$$
\aligned
\int_{|x|\geq r}\eta_r(|\nabla u_n|^2&+ V(x) |u_n|^2)\\
& \leq \int_{|x|\geq r}\frac{\sup_{n}|{K}(u_n)(x)|_{L^{\infty}(\R^N)}}{\ell_0}\eta_rV(x)|u_n|^2-\frac{2}{r}\int_{r\leq|x|\leq 2r}u_n\nabla u_n+o_n(1).
\endaligned
$$
From Lemma \ref{BNT}, there holds
$$
\frac{{\displaystyle \sup_{n}}|{K}(u_n)(x)|_{L^{\infty}(\R^N)}}{\ell_0}\leq \frac12,
$$
and so,
\begin{equation} \label{EST2}
\int_{|x|\geq 2r}(|\nabla u_n|^2+ V(x)|u_n|^2)\leq \frac{4}{r}\int_{r\leq|x|\leq 2r}|u_n||\nabla u_n|+o_n(1).
\end{equation}
By H\"{o}lder's inequality,
$$
\int_{r\leq|x|\leq 2r}|u_n||\nabla u_n|\leq |\nabla u_n|_{L^2(\R^N)}\big(\int_{r\leq|x|\leq 2r}|u_n|^2\big)^{\frac12}.
$$
Since $u_n\to u$ in $L^2(r\leq|x|\leq 2r)$ and $(u_n)$ is bounded in $E$, it follows that
$$
\limsup_{n\to\infty}\int_{r\leq|x|\leq 2r}|u_n||\nabla u_n|   \leq {\|u_n\|}\big(\int_{r\leq|x|\leq 2r}|u|^2\big)^{\frac{1}{2}}.
$$
Using H\"{o}lder's inequality again, we get
$$
\limsup_{n\to\infty}\int_{r\leq|x|\leq 2r}|u_n||\nabla u_n|   \leq {\|u_n\|}\big(\int_{r\leq|x|\leq 2r}|u|^{2^*}\big)^{\frac{1}{2^*}}|B_{2r}|^{\frac1N}.
$$
Once $|B_{2r}|=\omega_N2^Nr^N$, by \eqref{EST1} and \eqref{EST2},
$$
\limsup_{n\to\infty}\int_{|x|\geq 2r}(|\nabla u_n|^2+ V(x)|u_n|^2)  \leq 8\omega^{\frac1N}_N\|u_n\|\big(\int_{r\leq|x|\leq 2r}|u|^{2^*}\big)^{\frac{1}{2^*}}<\vr.
$$
\end{proof}

\begin{lem}\label{COM}
Assume $0<\mu<\frac{N+2}{2}$ and conditions $(f_1)-(f_3)$ hold. Then, $\Phi$ satisfies the $(PS)_{c_V}$ condition.
\end{lem}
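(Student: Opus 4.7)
The plan is to upgrade weak to strong convergence for the bounded $(PS)_{c_V}$ sequence. By Lemma \ref{BD}, $(u_n)$ is bounded in $E$, so along a subsequence $u_n \rightharpoonup u$ in $E$, $u_n \to u$ in $L^s_{\mathrm{loc}}(\R^N)$ for $s \in [1,2^*)$ and a.e.\ in $\R^N$. A standard density argument, testing $\Phi'(u_n)$ against $\varphi \in C_0^\infty(\R^N)$ and passing to the limit in the nonlocal term via Hardy--Littlewood--Sobolev together with the pointwise growth in \eqref{Growth} and dominated convergence, shows $\Phi'(u) = 0$. In particular $\Phi'(u)(u_n - u) = o_n(1)$, and since $\Phi'(u_n) \to 0$ and $(u_n - u)$ is bounded, also $\Phi'(u_n)(u_n - u) = o_n(1)$.

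Subtracting and using that $E$ is Hilbert with $\langle u_n,u\rangle \to \|u\|^2$, I obtain
\[
\|u_n - u\|^2 = \int_{\R^N} \bigl[K(u_n)g(x,u_n) - K(u)g(x,u)\bigr](u_n - u)\,dx + o_n(1).
\]
Thus everything reduces to proving that the right-hand integral tends to $0$. Given $\varepsilon > 0$, I choose $r = r(\varepsilon) > R$ as in Lemma \ref{PPS1} and split $\R^N = B_{2r} \cup (B_{2r})^c$.

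On $B_{2r}$, Lemma \ref{BNT} gives $\|K(u_n)\|_\infty \leq \ell_0/2$ uniformly in $n$, and the same holds for $K(u)$. Since $|g(x,s)| \leq |f(s)|$ and \eqref{Growth} gives a subcritical bound $|f(s)| \leq c_0|s|^{p-1} + c_0|s|^{2^*-1}$, combined with strong $L^s_{\mathrm{loc}}$ convergence for $s < 2^*$ and a truncation of the $2^*$-part using $\int_{B_{2r}} |u_n|^{2^*} \leq C$ together with equi-integrability of $|u_n-u|$ in subcritical norms, the integral over $B_{2r}$ is $o_n(1)$. On $(B_{2r})^c \subset (B_R)^c$, the penalization yields $g(x,u_n) \leq V(x) u_n/\ell$ and $g(x,u) \leq V(x)u/\ell$, so by Lemma \ref{BNT},
\[
\bigl|K(u_n)g(x,u_n) - K(u)g(x,u)\bigr| \leq \tfrac{1}{2}V(x)(u_n + u).
\]
Cauchy--Schwarz with respect to the weighted inner product then bounds the tail integral by $\bigl(\int_{(B_{2r})^c} V(u_n+u)^2\bigr)^{1/2}\bigl(\int_{(B_{2r})^c} V(u_n-u)^2\bigr)^{1/2}$, which is smaller than $C\sqrt{\varepsilon}$ by Lemma \ref{PPS1} and boundedness.

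Combining both estimates yields $\|u_n - u\| \to 0$, establishing the $(PS)_{c_V}$ condition. The principal difficulty is the nonlocal coupling: one cannot simply pass to the limit inside the convolution without first securing the uniform tail decay from Lemma \ref{PPS1} and the $L^\infty$ bound on $K$ from Lemma \ref{BNT}; these two ingredients are precisely what makes the penalized truncation $g(x,s)$ compatible with the Riesz potential and allow splitting the integrand into a locally compact piece and a tail absorbed by the weighted norm.
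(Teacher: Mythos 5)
Your argument is essentially correct and follows the same skeleton as the paper's proof: reduce $\|u_n-u\|^2$ to the nonlocal integral against $u_n-u$, invoke the uniform $L^\infty$ bound on $K(u_n)$ from Lemma~\ref{BNT}, use local compactness on a large ball, and control the tail via Lemma~\ref{PPS1}. Two remarks. First, your tail estimate is genuinely different from the paper's: you use the penalization inequality $g(x,s)\le V(x)s/\ell$ on $|x|\ge R$ together with Cauchy--Schwarz in the weighted norm, so that Lemma~\ref{PPS1} applies directly to $\int_{|x|\ge 2r}V|u_n|^2$; the paper instead uses the pointwise bound $|g(x,s)s|\le c_0|s|^{2^*}$ and the Sobolev embedding to reduce the tail to $\int_{|x|\ge r}|u_n|^{2^*}$. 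Both are valid; yours exploits the penalization more directly and avoids an exterior Sobolev step, at the (small) cost of also having to check that $\int_{|x|\ge 2r}Vu^2$ is small, which follows from $u\in E$. Your extra term $\int K(u)g(x,u)(u_n-u)$, absent from the paper's identity, is harmless but also unnecessary: the identity already follows from $\Phi'(u_n)(u_n-u)=o_n(1)$ and $\langle u_n,u_n-u\rangle=\|u_n-u\|^2+o_n(1)$, without first proving $\Phi'(u)=0$.

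Second, one step in your local estimate is not justified as written: you split $|f(s)|\le c_0|s|^{p-1}+c_0|s|^{2^*-1}$ and propose to absorb the critical piece by ``equi-integrability,'' but $\int_{B_{2r}}|u_n|^{2^*-1}|u_n-u|$ need not vanish under weak convergence alone, precisely because $2^*$ is critical. The splitting is, however, unnecessary: \eqref{Growth} provides the single subcritical bound $|sf(s)|\le c_0|s|^{p}$ with $p<2^*$, hence $|g(x,s)|\le c_0|s|^{p-1}$ everywhere, and the compact embedding of $E$ into $L^{p}(B_{2r})$ then gives the local convergence directly --- this is exactly how the paper argues. With that repair your proof is complete.
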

\begin{proof}
Since $u_n\rightharpoonup u$ in $E$, $ \Phi'(u_n)u_n=o_n(1)$ and $\Phi'(u_n)u=o_n(1)$, it follows that
$$
\|u_n-u\|^2=\int_{\R^N}K(u_n)g(x, u_n)(u_n-u)+o_n(1).
$$
Now, our goal is to show that the following limit holds,
$$
\int_{\R^N}K(u_n)g(x, u_n)(u_n-u)=o_n(1).
$$
We begin with recalling that by Lemma \ref{BNT}, there exists $C>0$ such that
$$
|K(u_n)|\leq C \quad \forall n \in \mathbb{N}.
$$
Notice that (\ref{Growth}) and $|g(x,s)s|\leq |s|^p$ with $p<2^*$,  for each $r>0$, the Sobolev's compact embedding implies
$$
\big|\int_{B_r}K(u_n)g(x, u_n)(u_n-u)\big|\leq C\int_{B_r}\big|g(x, u_n)(u_n-u)\big|\to0.
$$
From (\ref{Growth}), there also holds $|g(x,s)s|\leq |s|^{2^*}$, and so,
$$
\int_{\R^N\backslash B_r}K(u_n)|g(x, u_n)u_n|\leq C\int_{\R^N\backslash B_r}|u_n|^{2^*}.
$$
Combining Lemma \ref{PPS1} with Sobolev embedding theorem, given $\vr>0$, there exists $r(\vr)>0$ such that
$$
\limsup_{n\to \infty}\int_{\R^N\backslash B_r}K(u_n)|g(x, u_n)u_n|\leq C_1\vr.
$$
Similarly, applying H\"{o}lder inequality, we can also prove that
$$
\limsup_{n\to \infty}\int_{\R^N\backslash B_r}K(u_n)|g(x, u_n)u|\leq C_2\vr.
$$
In conclusion,
$$
\int_{\R^N}K(u_n)g(x, u_n)(u_n-u)\to 0.
$$
\end{proof}
Applying Lemmas \ref{mountain:1}  and \ref{COM},  we have the following result.
\begin{thm}\label{AQ}
Assume $0<\mu<\frac{N+2}{2}$ and conditions $(f_1)-(f_3)$ hold. Then, problem $(APE)$ has a positive solution with
$$
\|u\|^2\leq \frac{2\theta d}{\theta-2}.
$$
\end{thm}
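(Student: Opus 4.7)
The plan is to combine the Mountain Pass Theorem with the Palais--Smale condition already established. Lemma \ref{mountain:1} verifies the Mountain Pass geometry for $\Phi$ (positivity on a small sphere, negativity at a far point $e$), and Lemma \ref{COM} ensures the $(PS)_{c_V}$ condition. The classical Mountain Pass Theorem therefore produces a critical point $u \in E$ with $\Phi(u) = c_V$ and $\Phi'(u) = 0$. Since $c_V > 0 = \Phi(0)$, the critical point satisfies $u \not\equiv 0$.

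Next I would derive the norm bound. Using $\Phi'(u) = 0$ together with Lemma \ref{AR2}, one computes
\[
c_V = \Phi(u) - \frac{1}{\theta}\Phi'(u) u = \Big(\frac{1}{2} - \frac{1}{\theta}\Big)\|u\|^2 + \Big(\frac{1}{\theta}\Psi'(u) u - \Psi(u)\Big) \geq \frac{\theta - 2}{2\theta}\|u\|^2.
\]
Combined with the upper bound $c_V \leq d$ recorded in \eqref{m2}, this gives $\|u\|^2 \leq \frac{2\theta d}{\theta - 2}$, exactly the estimate asserted by the theorem.

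To obtain positivity, I would exploit the convention $f(s) = 0$ for $s < 0$ fixed at the beginning of Section 2, which forces $g(x,s) = 0$ and $G(x,s) = 0$ whenever $s \leq 0$. Testing $\Phi'(u) = 0$ against $u_- = \max\{-u, 0\}$ yields
\[
-\|u_-\|^2 = \int_{\R^N}\bigl(\nabla u \nabla u_- + V(x) u u_-\bigr) = \int_{\R^N} K(u)(x)\, g(x,u)\, u_- = 0,
\]
since the integrand on the right vanishes pointwise (on $\{u < 0\}$ the factor $g(x,u)$ is zero, and on $\{u \geq 0\}$ the factor $u_-$ is zero). Hence $u_- \equiv 0$, so $u \geq 0$. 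Because $K(u) \geq 0$ and $g(x,u) \geq 0$ on $\{u \geq 0\}$, the function $u$ is a nontrivial nonnegative weak solution of $-\Delta u + V(x) u \geq 0$; after standard elliptic regularity to ensure continuity, the strong maximum principle upgrades this to $u > 0$ in $\R^N$.

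At this stage the result is essentially a packaging of the preceding lemmas, so no serious obstacle remains. The only subtlety is tracking that the nonlocal convolution interacts correctly with the positive/negative decomposition, which is immediate from the nonnegativity of the Riesz kernel $|x|^{-\mu}$ together with the vanishing of $g(x,\cdot)$ on $(-\infty, 0]$; this is precisely why the penalized nonlinearity was truncated in the way defined in \eqref{PNT} rather than through a sign-changing modification.
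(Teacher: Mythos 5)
Your overall structure is the one the paper intends (the paper itself offers no written argument beyond citing Lemmas \ref{mountain:1} and \ref{COM}): the Mountain Pass Theorem without the $(PS)$ condition produces a $(PS)_{c_V}$ sequence, Lemma \ref{COM} upgrades it to a critical point $u$ with $\Phi(u)=c_V\geq \delta_0>0$, hence $u\not\equiv 0$, and your computation
$$
c_V=\Phi(u)-\tfrac{1}{\theta}\Phi'(u)u\geq \tfrac{\theta-2}{2\theta}\|u\|^2,
$$
combined with $c_V\leq d$ from \eqref{m2} and Lemma \ref{AR2}, gives exactly the stated norm bound. All of that is correct and is essentially the paper's route.

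The one step that fails as written is the claim that the convention $f(s)=0$ for $s<0$ forces $g(x,s)=0$ and $G(x,s)=0$ for all $s\leq 0$. This is true on $B_R$, but outside $B_R$ the definition of $\hat f$ places you in the second branch whenever $s<0$ and $V(x)>0$, since then $\ell f(s)=0>V(x)s$; hence $g(x,s)=\hat f(x,s)=\frac{V(x)}{\ell}s\neq 0$ and $G(x,s)=\frac{V(x)}{2\ell}s^2>0$ there. Consequently the integrand $K(u)g(x,u)u_-$ does \emph{not} vanish pointwise on $\{u<0\}\setminus B_R$, and your identity $-\|u_-\|^2=0$ is not justified. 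The conclusion is still recoverable with one extra ingredient: testing $\Phi'(u)u_-=0$ gives $\|u_-\|^2=-\int_{\R^N}K(u)g(x,u)u_-=\int_{\{u<0\}\setminus B_R}K(u)\frac{V(x)}{\ell}u_-^2$, and since $u\in\mathcal{B}$ and $\ell\geq\ell_0$, Lemma \ref{BNT} yields $|K(u)|_{L^\infty}/\ell\leq 1/2$, so $\|u_-\|^2\leq\frac{1}{2}\int_{\R^N}V(x)u_-^2\leq\frac{1}{2}\|u_-\|^2$, forcing $u_-\equiv 0$. After that, your nonnegativity-plus-strong-maximum-principle step goes through as described.
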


In the following, we will study the $L^{\infty}$ estimate of the solution $u$. To this end, we adapt some techniques in \cite{AS, BK}.
\begin{lem}\label{LE}
Let $u$ be the solution obtained in  Theorem \ref{AQ}. Then, there exists a constant $M_0$ which depends only on $N, \mu, \theta, m, p, c_0$, such that
$$
|u|_{{\infty}}\leq M_0.
$$
\end{lem}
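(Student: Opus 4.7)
The plan is a Moser-type iteration that exploits two uniform bounds already at our disposal. By Lemma \ref{BNT}, the convolution $K(u)$ is bounded in $L^\infty(\mathbb{R}^N)$ by $\ell_0/2$, a constant depending only on $N,\mu,p,c_0,m,\theta$ (through $d$). Theorem \ref{AQ} provides $\|u\|^2\leq 2\theta d/(\theta-2)$, which by the Sobolev embedding controls $|u|_{2^*}$ in terms of the same parameters. Combining these with the first estimate in \eqref{Growth}, and using $u>0$, the equation solved by $u$ yields the pointwise subsolution inequality
\[
-\Delta u + V(x)u \;\leq\; C_*\, u^{2^*-1} \quad \text{in } \mathbb{R}^N,
\]
with $C_*=C_*(N,\mu,\theta,m,p,c_0)$.

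For $\beta\geq 1$ and $L>0$, set $u_L:=\min\{u,L\}$ and test the equation with $\phi=u\,u_L^{2(\beta-1)}$ (justified, after multiplying by a smooth compactly supported cutoff that is then removed, since $u_L$ is bounded). Splitting on $\{u<L\}$ and $\{u\geq L\}$ and using $\nabla u_L=\chi_{\{u<L\}}\nabla u$, the chain rule together with dropping the nonnegative term $\int V u^2 u_L^{2(\beta-1)}$ gives, after writing $w:=u\,u_L^{\beta-1}$,
\[
\int_{\mathbb{R}^N}|\nabla w|^2 \;\leq\; \beta \int_{\mathbb{R}^N}\nabla u\cdot\nabla\phi \;\leq\; C_*\beta \int_{\mathbb{R}^N} u^{2^*-2}\, w^2.
\]
The Sobolev inequality then yields $S|w|_{2^*}^2\leq C_*\beta\int u^{2^*-2}w^2$.

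The crucial step is the absorption of the critical part. For $T>0$, split
\[
\int_{\mathbb{R}^N} u^{2^*-2} w^2 \;\leq\; T^{2^*-2}|w|_2^2 + \Bigl(\int_{\{u>T\}} u^{2^*}\Bigr)^{\!2/N}\! |w|_{2^*}^2,
\]
using H\"older with exponents $(N/2,\,2^*/2)$. Since $|u|_{2^*}$ is a priori bounded by a constant depending only on the listed parameters, $\int_{\{u>T\}} u^{2^*}\to 0$ as $T\to\infty$, so $T=T(\beta)$ can be chosen so that the second term, multiplied by $C_*\beta/S$, is absorbed into the left side. Letting $L\to\infty$ by Fatou produces
\[
|u|_{2^*\beta}^{2\beta} \;\leq\; C_{**}\beta\, T(\beta)^{2^*-2}\, |u|_{2\beta}^{2\beta}.
\]
Starting from $|u|_{2^*}\leq C_0$ and iterating along $\beta_n=(2^*/2)^n$, the resulting bounds $|u|_{2\beta_n}\leq M_n$ propagate multiplicatively, and the product $\prod_n(C_{**}\beta_n T(\beta_n)^{2^*-2})^{1/(2\beta_n)}$ converges thanks to the geometric growth of $\beta_n$. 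This gives $|u|_\infty=\lim_n |u|_{2\beta_n}\leq M_0$ with $M_0=M_0(N,\mu,\theta,m,p,c_0)$.

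The main obstacle is the critical exponent $2^*-1$ on the right-hand side: a naive Moser iteration would fail, and one must pass through the delicate absorption above. The technical bookkeeping consists in verifying that the cut-off $T(\beta)$ can be chosen depending only on $\beta$ and the universal bound on $|u|_{2^*}$, and that the iteration constants $C_{**}\beta_n T(\beta_n)^{2^*-2}$ grow at most polynomially in $\beta_n$, so that their $\tfrac{1}{2\beta_n}$-th power accumulates to a finite product. Ensuring every constant introduced is controlled exclusively by $N,\mu,\theta,m,p,c_0$ is where the bound on $\mathcal{B}$ and Lemma \ref{BNT} enter in an essential way.
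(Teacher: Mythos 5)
Your argument is essentially the paper's proof with the citation unpacked: the paper reduces the equation, via the uniform bound on $|K(u)|_{\infty}$ from Lemma \ref{BNT} and the estimate $|f(t)|\le c_0|t|^{2^*-1}$ from \eqref{Growth}, to the differential inequality $-\Delta u\le \tfrac12 a(x)(1+|u|)$ with $a=|u|^{4/(N-2)}\in L^{N/2}(\mathbb{R}^N)$ and then invokes the Brezis--Kato/Trudinger iteration (Struwe, Lemma B.3) plus bootstrap, which is precisely the truncation-and-absorption Moser scheme you carry out by hand. The one point to watch is your claim that $T(\beta)$ can be chosen from the universal bound on $|u|_{2^*}$ alone: the rate at which $\int_{\{u>T\}}u^{2^*}$ tends to $0$ is a property of the individual function rather than of its $L^{2^*}$ norm, so the uniformity of $M_0$ over the family $\{u_R\}_{R>1}$ does not literally follow from the norm bound by itself; this is, however, exactly the same elision the paper makes when it asserts that the norms $|u|_s$ depend only on $|a|_{N/2}$, so your writeup is faithful to, and no weaker than, the published argument.
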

\begin{proof} By hypothesis, $u$  is a solution of
$$
-\Delta u +V(x)u=K(u)f(u) \,\,\, \mbox{in} \,\,\, \mathbb{R}^{N}
$$
with $K(u) \in L^{\infty}(\mathbb{R}^{N})$ and $|K(u)|_{\infty} \leq \frac{1}{2}$. Since $V(x) \geq 0$ for all $x \in \mathbb{R}^{N}$ and
$$
|f(t)| \leq c_0|t|^{2^{*}-1} \,\,\, \forall t \in \mathbb{R},
$$
it follows that
$$
-\Delta u \leq \frac{1}{2}a(x)(1+|u|) \,\, \mbox{in} \,\, \mathbb{R}^{N},
$$
where $a(x)=|u|^{\frac{4}{N-2}} \in L^{\frac{N}{2}}(\mathbb{R}^{N})$. By a Trudinger-Br\'ezis-Kato iteration argument, see Struwe's book \cite[ Lemma B3, pg 273]{St}, we deduce that $u \in L^{s}(\mathbb{R}^{N})$ for all $s >1$. Moreover, once $|a|_{\frac{N}{2}}$ does not depend of $R>0$, the norms $|u|_{s}$ also do not depend of $R>1$. Now, fixing $s$ large enough, the bootstrap arguments implies that there is $M_0>0$, which is  independent of $R$, such that
$$
|u|_{{\infty}}\leq M_0,
$$
finishing the proof.

\end{proof}

\section{Proof of Theorem 1.1}

 From Lemma \ref{AQ}, $(APE)$ has a positive solution $u_R\in E$ for each $R>1$. Thereby, in order to prove the existence of solution problem $(SNE)$, we must show that there is $R>1$ such that $u_R$ satisfies the inequality
$$
f(u_R)\leq \frac{V(x)}{\ell_0}u_R\ \ \hbox{for}\ \ |x|\geq R.
$$
In fact, let $v$ be the $C^{\infty}(\mathbb{R}^N\backslash\ \{0\})$ harmonic function
$$
v(x)=\frac{R^{N-2}|u|_{\infty}}{|x|^{N-2}}.
$$
By Lemma \ref{LE}, we have the inequality
$$
u\leq v \ \  \hbox{on}\ \  \partial B_{R},
$$
which implies that the function
$$
w=
\left\{\begin{array}{l}
(u-v)_{+},\ \  \hbox{if}\ \  |x|\geq R,\\
\displaystyle 0,\hspace{1.2cm}\ \  \hbox{if}\ \  |x|\leq R
\end{array}
\right.
$$
belongs to $D^{1,2}(\R^N)$. Since $\Delta v=0$ in $\R^N\backslash\ B_{R}(0) $, $w=0$ on $\partial B_{R}$ and $w\geq0$,  it follows that
$$
\aligned
\int_{\R^{N}}|\nabla w|^{2}&=\int_{\R^{N}}\nabla (u-v)\nabla w=\int_{|x|\geq R}\big(K(u)(x)g(x,u)w-V(x)uw\big)\\
& \leq \int_{|x|\geq R}\big(\frac{1}{\ell_0}K(u)(x)-1\big)V(x)uw\\
&\leq0,
\endaligned
$$
showing that $w\equiv0$, i.e. $u\leq v$ in $|x|\geq R$, equivalently,
$$
u(x)\leq\frac{R^{N-2}|u|_{\infty}}{|x|^{N-2}}\leq\frac{R^{N-2}M_0}{|x|^{N-2}}, \ \ \hbox{for  all}\ \  |x|\geq R.
$$
Using that $|f(s)|\leq c_0|s|^{q-1}$, we have
$$
\frac{f(u)}{u}\leq c_{0}|u|^{q-2}\leq c_{0}M^{q-2}_0 \frac{R^{(q-2)(N-2)}}{|x|^{(q-2)(N-2)}} \ \ \hbox{for all}\ \ |x|\geq R.
$$
Now, fix $R>1$ such that $\mathcal{V}(R)>0$. Then, the last inequality gives combine with definition to give
$$
\frac{f(u)}{u}\leq c_{0}|u|^{q-2}\leq \frac{ c_{0}l_0M^{q-2}_0V(x)}{l_0\mathcal{V}(R)}, \quad \mbox{for all} \quad |x|\geq R.
$$
Thus, setting the number
$$
\mathcal{V}_0= c_{0}l_0M^{q-2}_0,
$$
and if there is $R>1$ such that
$$
\mathcal{V}(R)>\mathcal{V}_0
$$
we obtain the desired result for the $R>1$ above fixed, that is,
$$
f(u_R)\leq \frac{V(x)}{\ell_0}u_R\ \ \hbox{for}\ \ |x|\geq R,
$$
finishing the proof.  $\Box$

\section{ Proof of Theorem \ref{MRS2} }

In the proof of Theorem \ref{MRS2}, we replace the space $D^{1,2}(\mathbb{R}^{N})$ by $D_{rad}^{1,2}(\mathbb{R}^{N})$ and consider
$$
E_{rad}=\left\{u\in D_{rad}^{1,2}(\R^N):   \ \int_{\R^N} V( x)|u|^2<\infty\right\}.
$$

In this case, due to the Hardy-Littlewood-Sobolev inequality, the energy function
$$
I(u)=\frac12\|u\|^2-\frac{(N-2)^{2}}{2(2N-\mu)^{2}}\int_{\R^N}\big(\frac{1}{|x|^\mu}\ast |u|^{\frac{2N-\mu}{N-2}}\big) |u|^{\frac{2N-\mu}{N-2}}
$$
is well defined and belongs to $\mathcal{C}^1(E,\R)$.
Now, repeating the same ideas of the previous sections, we can consider again the problem $(APE)$. Here, the reader is invited to check that functional $\Phi$ still verifies, with natural modifications,  the Lemmas 2.3, 2.4 and 2.5.

\vspace{0.5 cm}

\noindent {\bf Proof of Theorem \ref{MRS2}}

From Lemmas 2.3, 2.4, 2.5  and Theorem 2.9, $(APE)$ has a positive solution $u_R \in E_{rad}$. Thereby, in order to prove that it is indeed a solution of problem $(SNE)$, we must  show that there is $R>1$ such that $u_R$ satisfies the inequality
$$
f(u_R(x))\leq \frac{V(x)}{\ell_0}u_R(x)\ \ \hbox{for}\ \ |x|\geq R.
$$
We begin recalling that
$$
|u_R(x)| \leq \frac{C\| u_R\|_{D^{1,2}}}{|x|^{\frac{N-2}{2}}} \,\,\,\, \forall  x \in \mathbb{R}^{N} \setminus \{0\}
$$
and
$$
\|u_R\|^{2} \leq \frac{2\theta d}{\theta -2},
$$
from where it follows that
$$
\|u_R\|_{D^{1,2}} \leq A,
$$
where $A= \sqrt{\frac{2\theta d}{\theta -2}}$. Then,
$$
u_R(x)\leq\frac{CA}{|x|^{\frac{N-2}{2}}}, \ \ \hbox{for}\ \  |x|\geq R.
$$
Since $f(t)= |t|^{\frac{4-\mu}{N-2}}t $, we have
$$
\frac{f(u_R)}{u_R}= |u_R|^{\frac{4-\mu}{N-2}}\leq \frac{(CA)^{\frac{4-\mu}{N-2}}}{|x|^{\frac{4-\mu}{2}}} \ \ \hbox{for}\ \ |x|\geq R.
$$
Using the definition of $\mathcal{W}(R)$, it follows that
$$
\frac{f(u_R)}{u_R}= |u_R|^{\frac{4-\mu}{N-2}}\leq \frac{\ell_0(CA)^{\frac{4-\mu}{N-2}}V(x)}{\ell_0 \mathcal{W}(R)} \ \ \hbox{for}\ \ |x|\geq R.
$$
Fixing
$$
\mathcal{W}_0=\ell_0 (CA)^{\frac{4-\mu}{N-2}}>0
$$
and supposing the there is $R>1$ such that
$$
\mathcal{W}(R)> \mathcal{W}_0,
$$
then, for the above $R>1$, we can ensure that
$$
\frac{f(u_R)}{u_R}\leq \frac{V(x)}{\ell_0}\ \ \hbox{for}\ \ |x|\geq R,
$$
implying that $I'(u_R)=0$ in $E_{rad}$. Now, using the Principle of Symmetric Criticality due to Palais \cite{Palais}, we can  conclude that $I'(u_R)=0$ in $E$,  finishing the proof. $\Box$

\vspace{0.5 cm}

\end{document}